\crefname{hypothesis}{Hypothesis}{Hypotheses}
\crefname{fact}{Fact}{Facts}
\DeclareMathOperator{\dist}{dist} 
\DeclareMathOperator{\conv}{conv} 
\DeclareMathOperator{\cl}{cl} 
\DeclareMathOperator{\circum}{circ} 
\newcommand{\norm}[1]{\left \lVert #1 \right \rVert}
\newcommand{\closure}[1]{\cl\left({ #1 }\right)}
\def\re{\mathds R}
\def\na{\mathds N}
\def\bSS{\mathbf S}
\def\DD{\mathbf D}
\def\CC{\mathbf C}
\def\vx{\mathbf x}
\def\vy{\mathbf y}
\def\vz{\mathbf z}
\def\lV{\left\lVert }
\def\rV{\right\rVert }
\def\lv{\left\lvert }
\def\rv{\right\rvert}
\DeclareMathOperator{\Id}{Id}
\DeclareMathOperator{\inte}{int}
\newcommand{\CRMOp}{{\mathscr{C}}}
\newcommand{\specificthanks}[1]{\@fnsymbol{#1}}% Inserts a specific \thanks symbol
\newlist{lista}{enumerate}{1}
\setlist[lista]{label=\alph*., nosep,leftmargin=*,align=right}
\newlist{listi}{enumerate}{1}
\setlist[listi]{label={\upshape(\roman*\upshape)},leftmargin=*,align=right, widest=iii,nosep, format=\bf}
\begin{document}

\title{A finitely convergent circumcenter method for the Convex Feasibility Problem\thanks{
    \funding{\textbf{RB} was partially supported by Brazilian agency Fundação de Amparo à Pesquisa do Estado do Rio de Janeiro (Grant E-26/201.345/2021); \textbf{YBC} was partially supported by the USA agency National Science Foundation (Grant DMS-2307328). \textbf{LRS} was partially supported by Brazilian agencies Conselho Nacional de Desenvolvimento Científico e Tecnológico (Grant 310571/2023-5) and Fundação de Amparo à Pesquisa do Estado de São Paulo (Grant 2023/03969-4). \textbf{DL} thanks the Brazilian agency Coordenação de Aperfeiçoamento de Pessoal de Nível Superior for the doctoral scholarship.}}}

\author{Roger Behling\textsuperscript{\specificthanks{4},}\thanks{{Department of Mathematics}, {Federal University of Santa Catarina}, {{Blumenau}-{SC} -- {89065-300}, {Brazil}}. (\email{rogerbehling@gmail.com}, \email{l.r.santos@ufsc.br})} 
\and Yunier Bello-Cruz\thanks{{{Department of Mathematical Sciences}, {Northern Illinois University}, {{DeKalb}-{IL} -- {60115-2828}, {USA}} (\email{yunierbello@niu.edu})}}
        \and Alfredo N.~Iusem\thanks{{School of Applied Mathematics}, {Fundação Getúlio Vargas}, {{Rio de Janeiro}-{RJ} -- {22250-900}, {Brazil}} (\email{alfredo.iusem@fgv.br})}
        \and Di Liu\thanks{{Instituto de Matemática Pura e Aplicada}, {{Rio de Janeiro}-{RJ} -- {22460-320}, {Brazil}} (\email{di.liu@impa.br})}
        \and Luiz-Rafael Santos\footnotemark[2]}

\headers{A finitely convergent 
circumcenter method for the CFP}{Behling, Bello-Cruz, Iusem, Liu and Santos}

        \maketitle

\begin{abstract}

In this paper, we present a variant of the circumcenter method for the Convex Feasibility Problem (CFP), ensuring finite convergence under a Slater assumption. The method replaces exact projections onto the convex sets with projections onto separating halfspaces, perturbed by positive exogenous parameters that decrease to zero along the iterations. If the perturbation parameters decrease slowly enough, such as the terms of a diverging series, finite convergence is achieved. To the best of our knowledge, this is the first circumcenter method for CFP that guarantees finite convergence.

\end{abstract}

\begin{keywords}
Convex Feasibility Problem, Finite convergence,
Circumcentered-reflection method, Projection methods.
\end{keywords}

\begin{MSCcodes} 49M27, 65K05, 65B99, 90C25
\end{MSCcodes}

\section{Introduction}\label{s1}
The convex feasibility problem (CFP) aims at finding a point in the intersection of $m$ closed and convex sets 
$C_i \subset \re^n$, $i=1,\ldots, m$, \emph{i.e.},
\[
\label{eq.CFP_1}
    \text{find } x^\star \in  C\coloneqq \bigcap_{i=1}^m C_i.
\]
Convex feasibility represents a modeling paradigm for solving numerous engineering and physics problems, such as image recovery \cite{Combettes:1996}, wireless sensor networks localization \cite{Hu:2016}, gene regulatory network inference \cite{Wang:2017}, and many others.

Projection-reflection based methods are widely recognized and effective schemes for solving a diverse range of feasibility problems, including \cref{eq.CFP_1}. This sort of methods continues to gain popularity due to their ability to strike a balance between high performance and simplicity, as evidenced by their extensive utilization (see, \emph{e.g.}, \cite{Bauschke:1996}). Among these methods, two particularly renowned and widely adopted methods are the classical Douglas-Rachford method (DRM) and its modifications (see, \emph{e.g.}, \cite{Bauschke:2014b,Douglas:1956,AragonArtacho:2018a}), and the famous method of alternating projections (MAP) (see, \emph{e.g.}, \cite{Bauschke:1993,Bauschke:2016}). 
The elementary Euclidean concept of \emph{circumcenter} has recently been employed to improve the convergence of those classical projection-reflection methods for solving the CFP \cref{eq.CFP_1}.

The circumcentered-reflection method (CRM) was first presented in \cite{Behling:2018} 
as an acceleration technique for DRM for the two set affine CFP. Since then, CRM has been shown as a valid and powerful new tool for solving (non)convex structured feasibility problems because of its ability to minimize the inherent \emph{zigzag} behavior of projection-reflection based methods, in particular. In~\cite{Behling:2020}, for instance, CRM was connected to the classical MAP. Moreover, CRM  obviates the  \emph{spiraling} behavior for the classical DRM \cite{Dizon:2022,Lindstrom:2022,Dizon:2022a}. There are already a plethora of works  where  circumtencered-based schemes were  studied; see for instance \cite{Behling:2024b, Araujo:2022, Arefidamghani:2021, Arefidamghani:2023, Behling:2020, Behling:2021b, Behling:2024, Behling:2023, Bauschke:2018, Bauschke:2020, Bauschke:2021b, Bauschke:2021d, Bauschke:2022a, Ouyang:2018, Ouyang:2021a, Ouyang:2022a, Ouyang:2022b, Ouyang:2023,Araujo:2021, Arefidamghani:2022a,Behling:2018a}.

The circumcenter of three points $x,y,z\in \re^n$, denoted as $\circum(x,y,z)$, is the point in $\re^n$ that lies in the affine manifold spanned by $x,y$ and $z$ and is equidistant to these three points. Given two closed convex sets $A,B\subset \re^n$, the CRM iterates by means of the operator $\CRMOp_{A,B}:\re^n\to \re^n$ defined as 
\[
    \label{eq.definition_Circumcenter}
    \CRMOp_{A,B}(x)\coloneqq\circum(x,R_A(x),R_B(R_A(x)),
\]
where
$P_A:\re^n\to A, P_B:\re^n\to B$ are the orthogonal projections onto $A,B$ respectively, $R_A = 2P_A-\Id$, $R_B = 2P_B-\Id$
(\emph{i.e.}, $R_A, R_B$ are the \emph{reflections} onto $A$, $B$, respectively), 
and $\Id$ is the identity operator in $\re^n$. Hence, the sequence generated by CRM is defined as 
\[\label{eq:def.CRM_operator}
x^{k+1}=\CRMOp_{A,B}(x^k).
\] 
If $x^k\in A\cap B$, then the sequence stops at iteration $k$, in which case, we say that the algorithm has finite convergence.

One limitation of CRM is that its convergence theory requires one of the sets to be an affine manifold. In \cite{Behling:2018}, it was pointed out that the iteration defined in \cref{eq:def.CRM_operator} may fail to be well-defined or to approach $A\cap B$, if such an assumption is not met. Subsequently, a specific counterexample was presented in \cite{AragonArtacho:2020} where CRM does not converge for two general convex sets. However, CRM can be used to solve the general CFP with $m$ general arbitrary closed convex sets by employing Pierra's product space reformulation, as presented in \cite{Pierra:1984}. The product space reformulation relies on the Cartesian product    $\CC\coloneqq C_1 \times \cdots \times C_m \subset \re^{nm}$ and on  $\DD\coloneqq \{(x,x,\ldots,x) \in \re^{nm} \mid x\in \re^n\}$. $\DD$ is said to be the \emph{diagonal subspace} in $\re^{nm}$. One can easily see that
\[
    \label{eq.Product_Space1}
    x^\star\in C \Leftrightarrow \vz^\star\coloneqq (x^\star,x^\star,\ldots,x^\star)\in \CC \cap \DD,
\]
where $C=\bigcap_{i=1}^m C_i$. Due to \cref{eq.Product_Space1}, solving \cref{eq.CFP_1} corresponds to solving
\[
    \label{eq.Product_Space2}
    \text{find }\vz^\star\in\CC \cap \DD.
\]
Since $\DD$ is an affine manifold, CRM can be applied to finding a point in $\CC \cap\DD$ in the product space $\re^{nm}$.

In \cite{Arefidamghani:2021}, it was proved that the Circumcenter Reflected Method (CRM) with Pierra's product space reformulation achieves a superior convergence rate compared to MAP. The iteration operator of MAP is simply the composition of two orthogonal projections $P_A$ and $P_B$, that is, $P_A P_B$. Furthermore, in the same paper, it was shown that for certain special cases, circumcenter schemes such as CRM attain a superlinear rate of convergence \cite{Arefidamghani:2021, Behling:2024}, and even linear convergence in the absence of an error bound. Notably, no other known method utilizing individual projections achieves such convergence rate even for these particular cases. 

More recently, an extension of CRM, called cCRM (acronym for centralized circumcentered-reflection method), was introduced in \cite{Behling:2024b} to overcome the drawback of CRM, namely the requirement that one of the sets be an affine manifold. The cCRM can solve the CFP for any pair of closed and convex sets and converges linearly under an error-bound condition (similar to a transversality hypothesis), and superlinearly if the boundaries of the convex sets are smooth. Yet, for solving the CFP with $m$ sets, it is necessary to go through the Pierra reformulation in the product space.

For solving the CFP with $m$ sets more efficiently, a successive extension of cCRM, called ScCRM, was developed in \cite{Behling:2024}. ScCRM avoids the product space reformulation and inherits from cCRM the linear and superlinear convergence rates under the error-bound and smoothness hypothesis, respectively.

All the aforementioned methods require several exact orthogonal projections onto some convex sets in each iteration. However, determining the exact orthogonal projection is computationally expensive in practice, except in some very special cases. This limitation was overcome in \cite{Araujo:2022}, where an approximate version of CRM, called CARM, is proposed. CARM replaces the exact projections onto the original sets with projections onto sets containing them ({\emph{e.g.}},  halfspaces or Cartesian products of halfspaces), which are easily computable.

In this paper, we improve upon the aforementioned methods by introducing an algorithm in the product space that  uses projections onto perturbed halfspaces, referred to as \emph{Perturbed Approximate Circumcenter Algorithm} (PACA).  PACA's  iteration reads as follows
\[
    \label{eq.def.PACA_iteration}
    \vx^{k+1} =  \CRMOp_{\bSS^k ,\DD}(\vx^k) = \circum(\vx^k,R_{\bSS^k}(\vx^k),R_{\DD}(R_{\bSS^k}(\vx^k))),
\]
where $\bSS^k$ is a suitable perturbed separating set for $\vx^k$.
For any $\vx^0 \in \DD$, we will be able to obtain a circumcenter sequence in $\DD$ generated  by means of  \cref{eq.def.PACA_iteration}  converging in finitely many steps to a point in $\CC \cap \DD$, if the Slater condition holds, meaning that the intersection of the convex sets has nonempty interior. 
This is possible by strategically  building the perturbed separating  set  
$\bSS^k$ (see \cref{eq.def.s_i_k,eq.def.bf_S_k}). These perturbed separating sets resemble the surrogate 
halfspaces used by \cite{Kiwiel:1995} in the context of CFP, and by \cite{Kiwiel:1997,Combettes:2000} in a more general framework. 

Finite convergence results for projection-based methods  with perturbations applied to CFP, under the Slater condition, have been obtained before \cite{Bauschke:2015b,Censor:2011,Crombez:2004,Kolobov:2021,Kolobov:2022,Polyak:2001,Iusem:1986a,DePierro:1988a};  we point out the readers to \cite[Tab.~1]{Kolobov:2022}, a summary where many of these results are compared. We mention that in  \cite{Bauschke:2017c}, it is proved that even the Douglas-Rachford method applied to the two-set setting can converge in finitely many steps under several assumptions regarding the underlying sets. 
Moreover, a generalized version of MAP  is proposed and analyzed in \cite{Falt:2017, Falt:2023}, and the algorithm can achieve finite convergence under the assumption of some regularity of the underlying convex sets at a solution~\cite[Thm.~6.1]{Falt:2023}.

The paper is organized as follows: In \cref{s2}, we present some definitions and preliminary material. In \cref{s3}, we state our algorithm, PACA,  \cref{s4}, shows the development of  the convergence analysis. Finally, \cref{s5} shows the linear convergence rate of the sequence generated by our algorithm. Finite convergence is proved in \cref{s6} under additional assumptions on the perturbation parameters. Finally, \cref{s7} presents numerical experiments for solving ellipsoids intersection problems, comparing PACA with CARM (in the product space), the Simultaneous (Cimmino~\cite{Cimmino:1938}) subgradient projection method (SSPM)  by \cite{Iusem:1986a}, and the Modified Cyclic (Kaczmarz~\cite{Kaczmarz:1937}) subgradient projection method  (CSPM) from~\cite{DePierro:1988a}.

\section{Preliminary results}\label{s2}
In this paper, we consider the CFP as in \cref{eq.CFP_1}, assuming that the sets $C_i$ are given as:
\[
    \label{eq.def.CFP_functionValues}
    C_i=\{x\in \re^n \mid f_i(x) \leq 0\},
\]
where $f_i: \re^n \rightarrow \re $ is a convex function for $i=1,\ldots,m$.

This formulation does not impose any limitations in principle, as we can always choose $f_i(x)=\left \Vert P_{C_i}(x)-x\right\Vert$. However, this choice is not consistent with the later imposition of the Slater hypothesis. Nevertheless, in most instances of the CFP, it is possible to formulate them as in \cref{eq.def.CFP_functionValues} using suitable functions $f_i$.

A basic assumption for our finite convergence result is the Slater condition, defined as follows:

\begin{definition}[Slater condition]
\label{def.Slater_condition}
    There exists $\hat x\in\re^n$ such that
    \[
    \label{eq.def.Slater_condition}
        f_i(\hat x) <0,
    \] for all $i=1,\ldots,m$.
\end{definition}

We continue by recalling the explicit formula of the orthogonal projection onto a halfspace.

\begin{proposition}
\label{lem.projection_to_halfspace}
    If $H=\{y\in \re^n\mid a^\top y\leq \alpha\}$, with $a\in \re^n$, $\alpha \in R$, then the orthogonal projection onto $H$ is given by
    \[
        \label{eq.def.projection_to_halfspace}
        P_H(x) = x- \frac{\max\{0,\alpha -a^\top x\}}{\|a\|^2}a.
    \]
\end{proposition}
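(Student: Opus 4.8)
\emph{Proof proposal.} The plan is to identify $P_H(x)$ with the unique solution of the strictly convex constrained least-squares problem $\min\{\tfrac12\norm{y-x}^2 : y \in H\}$ and to read the displayed formula off its optimality conditions. First I would record that we may take $a\neq 0$ (otherwise $H$ is either all of $\re^n$ or empty, and the right-hand side of \cref{eq.def.projection_to_halfspace} is not defined), so that $H$ is a genuine nonempty closed convex halfspace; existence and uniqueness of the minimizer then follow from closedness and convexity of $H$ together with strong convexity of the objective. This reduces the proposition to computing that minimizer explicitly.

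Next I would split according to whether the constraint is active. If $x\in H$, then $x$ is feasible and is trivially its own nearest point, so $P_H(x)=x$; this is exactly the branch of \cref{eq.def.projection_to_halfspace} in which the numerator $\max\{0,\cdot\}$ vanishes. If $x\notin H$, I would argue that the constraint must be active at the optimum, i.e. $a^\top P_H(x)=\alpha$: otherwise a short move from $P_H(x)$ toward $x$ would stay feasible and strictly reduce the distance, contradicting optimality. In this active regime the projection onto the halfspace coincides with the projection onto its bounding hyperplane $\{y:a^\top y=\alpha\}$.

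For the active case I would invoke the normal-cone (KKT) optimality condition: the residual $x-P_H(x)$ lies in the normal cone of $H$ at $P_H(x)$, hence $x-P_H(x)=\lambda a$ for some $\lambda\ge 0$. Imposing the active-constraint equation $a^\top P_H(x)=\alpha$ then yields a single scalar linear equation for $\lambda$; solving it and substituting back produces a correction of the form $(\text{scalar})\,a/\norm{a}^2$. Finally I would merge the inactive and active branches by noting that this scalar is precisely the quantity captured by the $\max\{0,\cdot\}$ in the numerator of \cref{eq.def.projection_to_halfspace}, recovering the stated formula across all $x$.

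The main obstacle is the bookkeeping in the active case: one must verify that the multiplier $\lambda$ coming from the hyperplane computation is genuinely nonnegative—so that $P_H(x)$ is the true halfspace projection and not merely a stationary point—and that the inactive and active branch formulas glue continuously along the boundary $a^\top x=\alpha$ into the single $\max\{0,\cdot\}$ expression of \cref{eq.def.projection_to_halfspace}. Everything beyond that is the routine algebra of solving one scalar linear equation for $\lambda$, which I would not grind through here.
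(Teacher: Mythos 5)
Your proposal is correct, and it is exactly the ``elementary calculation from the definition of orthogonal projection'' that the paper's one-line proof alludes to: the variational characterization of $P_H(x)$, an inactive/active case split, and the normal-cone condition $x-P_H(x)=\lambda a$, $\lambda\ge 0$, with $\lambda$ determined by the active constraint. Your remark that one must take $a\neq 0$ is a sensible hypothesis that the paper omits.

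One point you should flag explicitly rather than gloss over: the formula your argument actually produces is
\[
P_H(x)=x-\frac{\max\{0,\,a^\top x-\alpha\}}{\norm{a}^2}\,a,
\]
which is \emph{not} literally the display in \cref{eq.def.projection_to_halfspace}. As printed there, the numerator is $\max\{0,\alpha-a^\top x\}$, which vanishes precisely when $a^\top x\ge\alpha$ (i.e., when $x$ lies outside $H$ or on its boundary) and is strictly positive for $x$ with $a^\top x<\alpha$ --- the opposite of your branch identification ``the numerator vanishes exactly when $x\in H$''. So the proposition as stated carries a sign typo, and your closing claim that the computed multiplier ``is precisely the quantity captured by the $\max\{0,\cdot\}$'' holds only after replacing $\alpha-a^\top x$ by $a^\top x-\alpha$. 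The corrected version is the one the paper itself relies on later: in \cref{eq.def.Approximate_projection_onto_C_epsilon}, the halfspace $S_i^k$ has $a=u_i^k$ and $\alpha=(u_i^k)^\top x^k-f_i(x^k)-\epsilon_k$, so that $a^\top x^k-\alpha=f_i(x^k)+\epsilon_k$, matching the numerator $\max\{0,f_i(x^k)+\epsilon_k\}$ there. Apart from making this correction explicit, nothing is missing from your argument: the nonnegativity of $\lambda$ follows exactly by your feasibility/descent reasoning, and the two branches glue continuously along $a^\top x=\alpha$.
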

\begin{proof}
The result follows from the definition of orthogonal projection after an elementary calculation.   
\end{proof}

The following proposition is a key result concerning the circumcenter operator $\CRMOp_{A,B}$, as defined in \cref{eq.definition_Circumcenter}. It represents the only result pertaining to circumcenter steps that will be used in our convergence analysis.

\begin{proposition}
\label{lem.CRM} Let $A$ and $B$ be closed convex subsets of $\re^n$ with nonempty intersection.
Consider the circumcenter operator $\CRMOp_{A,B}$ defined in
\cref{eq.definition_Circumcenter}. Suppose that $B$ is an affine manifold. Then, for all $x\in B$ there exists a closed and convex set $H(x)\subset\re^n$ such that,
    \begin{listi}
    \item $A\subset H(x)$;
        \item $\CRMOp_{A,B}(x)=P_{H(x)\cap B}(x)$;
        % \item  $\CRMOp_{A,B}(x)$ belongs to $B$;
        \item $\lV \CRMOp_{A,B}(x) -s \rV^2\le\lV x-s\rV^2 -\lV \CRMOp_{A,B}(x) - x\rV^2$ for all $s\in A$.
    \end{listi}
\end{proposition}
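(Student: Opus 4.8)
The plan is to exhibit $H(x)$ explicitly as the supporting halfspace of $A$ at $P_A(x)$ and then to verify the three items in order, with the substance concentrated in item~(ii). If $x\in A$ then $P_A(x)=x$, both reflections fix $x$, hence $\CRMOp_{A,B}(x)=x\in A\cap B$ and every claim holds trivially (take $H(x)=\re^n$); so I assume $x\notin A$ and set $p\coloneqq P_A(x)\neq x$. I define
\[
  H(x)\coloneqq\{\,y\in\re^n\mid \langle x-p,\,y-p\rangle\le 0\,\}.
\]
The obtuse-angle characterization of the projection onto the convex set $A$, namely $\langle x-P_A(x),\,a-P_A(x)\rangle\le 0$ for every $a\in A$, says precisely that $A\subseteq H(x)$, which is item~(i). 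Because everything from here on involves only $x$, $B$, and one reflection across $B$, I would translate the configuration so that $x$ is the origin; since $x\in B$ and $B$ is affine, $B$ becomes a linear subspace and $P_B$, $R_B=2P_B-\Id$ become linear, which is what makes the circumcenter computable.

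For item~(ii), with $x$ at the origin put $q\coloneqq P_B(p)$, so that $R_A(x)=2p$, $P_B(R_A(x))=2q$, and $R_B(R_A(x))=4q-2p$; the circumcenter is then the unique point of $\aff\{0,\,2p,\,4q-2p\}$ equidistant from these three points. The first thing to nail down is nondegeneracy: since $A\cap B\neq\emptyset$ and $A\subseteq H(x)$, the set $H(x)\cap B\supseteq A\cap B$ is nonempty, and this forces $q\neq 0$ --- indeed, for $y\in B$ one has $\langle p,y\rangle=\langle q,y\rangle$, so if $q=0$ the inequality defining $H(x)\cap B$ would read $0\ge\|p\|^2>0$, making $H(x)\cap B$ empty. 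Solving the two perpendicular-bisector equations then gives the closed form $\CRMOp_{A,B}(x)=(\|p\|^2/\|q\|^2)\,q$ (valid also in the subcase $p\in B$, where $q=p$ and the three points become collinear). This point lies in $B$ (a multiple of $q\in B$) and on the bounding hyperplane $\langle p,\cdot\rangle=\|p\|^2$ of $H(x)$, and I would finish by checking its projection optimality, $\langle x-\CRMOp_{A,B}(x),\,y-\CRMOp_{A,B}(x)\rangle\le 0$ for all $y\in H(x)\cap B$, which identifies it as $P_{H(x)\cap B}(x)$. One may instead avoid the closed form and verify directly that $P_{H(x)\cap B}(x)$ lies in the affine hull of the three points and is equidistant from them, appealing to uniqueness of the circumcenter.

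Item~(iii) is then immediate from item~(ii): as $H(x)\cap B$ is closed and convex and $\CRMOp_{A,B}(x)=P_{H(x)\cap B}(x)$, the Pythagorean inequality for projections yields
\[
  \norm{\CRMOp_{A,B}(x)-s}^2\le\norm{x-s}^2-\norm{\CRMOp_{A,B}(x)-x}^2
\]
for every $s\in H(x)\cap B$, and by item~(i) this includes all $s$ in the solution set $A\cap B\subseteq H(x)\cap B$, which is the estimate used for Fej\'er monotonicity later on. The main obstacle is item~(ii): setting up and solving the circumcenter system and matching it to the projection onto $H(x)\cap B$, where the genuinely delicate point is well-definedness --- excluding the collinear configuration in which no circumcenter exists --- and this is exactly where the hypothesis $A\cap B\neq\emptyset$ enters, through $q\neq 0$. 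Items~(i) and~(iii) are routine consequences of the projection characterization.
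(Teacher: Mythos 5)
Your argument is correct and, in contrast with the paper's proof---which simply cites Lemma~3 of \cite{Behling:2018} and Lemma~3.3/Proposition~3.4 of \cite{Araujo:2022}---it reconstructs the content of those references from scratch: the supporting halfspace $H(x)$ of $A$ at $P_A(x)$, the reduction to a linear subspace by translating $x$ to the origin, the closed form $\CRMOp_{A,B}(x)=\left(\lV p\rV^2/\lV q\rV^2\right)q$ with $p=P_A(x)$, $q=P_B(p)$, and the variational inequality identifying this point as $P_{H(x)\cap B}(x)$. Each step checks out, including the nondegeneracy argument that $A\cap B\neq\emptyset$ forces $q\neq 0$ and the collinear subcase $p=q$; so items (i) and (ii) are fully established, and what you have written is essentially the computation that the cited lemmas carry out.

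The one point to flag is item (iii). You prove the inequality for $s\in H(x)\cap B$, hence for $s\in A\cap B$, whereas the proposition asserts it for all $s\in A$. This is not a defect you could have repaired: the claim with ``for all $s\in A$'' is false. Take $n=2$, $B$ the first coordinate axis, $x=(0,0)$ and $A=\{y\in\re^2\mid y_1+y_2\ge 2\}$, so that $p=(1,1)$, $q=(1,0)$ and $\CRMOp_{A,B}(x)=(2,0)$; for $s=(-8,10)\in A$ one gets $\lV \CRMOp_{A,B}(x)-s\rV^2=200$ while $\lV x-s\rV^2-\lV \CRMOp_{A,B}(x)-x\rV^2=164-4=160$. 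The inequality holds exactly on the halfspace $\{y\mid\langle x-c,\,y-c\rangle\le 0\}$ with $c=\CRMOp_{A,B}(x)$, which contains $H(x)\cap B$ but not $A$. Your restricted version is therefore the correct statement, and it is all the paper ever uses (the Fej\'er estimate in \cref{lem.Fejer*monotoness_PAC} is invoked only for ${\bf s}\in\CC^k\cap\DD$), though \cref{cor.nonexpansiveness_CRM} inherits the same overstatement by asserting the bound for all ${\bf s}\in\CC^k$ and all $\vx\in\re^{nm}$. You should say explicitly that you are proving (iii) for $s\in A\cap B$ rather than leaving the quantifier silently weakened.
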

\begin{proof} The result follows from Lemma 3 in \cite{Behling:2018};  
cf. also Lemma 3.3 and Proposition 3.4 in \cite{Araujo:2022}.   
In fact $H(x)$ is a halfspace containing $A$, with an explicit formula, which ensures that item (i) holds, but this is irrelevant for our purposes. We also mention that item (ii) is the essential result, while item (iii) follows immediately from it.     
\end{proof}
 
A useful concept for analyzing the convergence of projection-based algorithms is the notion of \emph{Fej\'er monotonicity}. A sequence $\{y^k\}\subset\re^n$ is said to be \emph{Fej\'er monotone} with respect to a set $M\subset\re^n$ if $\lVert y^{k+1}-y\rVert\leq\lVert y^k-y\rVert$ for all $y\in M$ and all $k\in\mathbb{N}$. It is known that if $\{y^k\}$ is Fej\'er monotone with respect to $M$, then $\{y^k\}$ is bounded, and if it has a cluster point $y^\star\in M$, then the entire sequence $\{y^k\}$ converges to $y^\star$~\cite[Thm.~2.16]{Bauschke:1996}. In our analysis, we require a slightly weaker notion and also need to handle cases where $M$ is open, and the cluster points of $\{y^k\}$ belong to the boundary of $M$.

We now introduce the appropriate tools to address this situation.

\begin{definition}[Fejér* monotonicity]
\label{def.Fejer*monotone}
    Let $M\subset \re^n$ and consider a sequence $\{y^k\} \subset \re^n$.  We say that $\{y^k\}$ is \emph{Fej\'er* monotone with respect to $M$} if for any point $y \in M$, there exists $N(y)\in\na$ such that
    \[\label{eq:def.Fejer*monotone}
        \lV y^{k+1}-y\rV\le\lV y^k -y\rV,
    \]
    for all $k\ge N(y)$.
\end{definition}

The main difference with the usual Fej\'er monotonicity
notion lies in the fact that now the decreasing distance property holds for the tail of the sequence, starting at some index which depends on the considered point $y\in M$. We present next some useful properties of Fej\'er* monotonicity.

\begin{proposition}[Characterization of Fejér* monotonicity]
    \label{lem.Fejer*monotone.char} 
    Let $\left\{y^k\right\}\subset\re^n$ be a Fejér* monotone  w.r.t.~a nonempty set $M$ in $\re^n$. Then,

     \begin{listi}
        
\item $\left\{y^k\right\}$ is bounded;
\item for every $y\in M$, the scalar sequence $\left\{\norm{y^k-y}\right\}$ converges;
\item  $\left\{y^k\right\}$ is Fejér* monotone w.r.t.~$\conv(M)$;
\item for every $\Bar{y} \in \closure{\conv (M)}$, the closure of $\conv (M)$, the scalar sequence $\left\{\left\|y^k-\Bar{y}\right\|\right\}$ converges.

\end{listi}
\end{proposition}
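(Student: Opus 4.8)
The plan is to treat the four items in the stated order, extracting the essential idea in item (iii) and deriving the rest by routine arguments. For item (i), I would fix any $y\in M$ (possible since $M\neq\emptyset$) and invoke \cref{def.Fejer*monotone}: there is $N(y)\in\na$ with $\norm{y^{k}-y}\le\norm{y^{N(y)}-y}$ for all $k\ge N(y)$, so the tail of $\{y^k\}$ is contained in the closed ball of radius $\norm{y^{N(y)}-y}$ around $y$; together with the finitely many initial terms, this gives boundedness. For item (ii), I would again fix $y\in M$ and note that the scalar sequence $\{\norm{y^k-y}\}$ is nonincreasing for $k\ge N(y)$ and bounded below by $0$; hence its tail converges, and therefore so does the entire sequence.

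The heart of the matter is item (iii). Let $y\in\conv(M)$ and write $y=\sum_{j=1}^{p}\lambda_j y_j$ with $y_j\in M$, $\lambda_j\ge 0$ and $\sum_{j=1}^{p}\lambda_j=1$; set $N\coloneqq\max_{1\le j\le p}N(y_j)$, which is finite. The key observation is that, for each fixed $k$, the map
\[
    \phi(z)\coloneqq\norm{y^{k+1}-z}^2-\norm{y^k-z}^2=\norm{y^{k+1}}^2-\norm{y^k}^2+2(y^k-y^{k+1})^\top z
\]
is an \emph{affine} function of $z$, because the quadratic terms $\norm{z}^2$ cancel. For every $k\ge N$ we have $\phi(y_j)\le 0$ for each $j$ by Fej\'er* monotonicity with respect to $M$; affinity then gives $\phi(y)=\sum_{j=1}^{p}\lambda_j\,\phi(y_j)\le 0$, that is, $\norm{y^{k+1}-y}\le\norm{y^k-y}$ for all $k\ge N$. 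Taking $N(y)\coloneqq N$ shows that $\{y^k\}$ is Fej\'er* monotone with respect to $\conv(M)$.

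Finally, for item (iv), combining item (iii) with item (ii) applied to $\conv(M)$ shows that $\{\norm{y^k-y}\}_k$ converges for every $y\in\conv(M)$; it remains to pass to the closure. Given $\bar y\in\closure{\conv(M)}$, I would pick $y_\ell\in\conv(M)$ with $\norm{y_\ell-\bar y}\to 0$ and use the triangle-inequality estimate $\bigl|\norm{y^k-\bar y}-\norm{y^k-y_\ell}\bigr|\le\norm{\bar y-y_\ell}$, which is uniform in $k$. For $\varepsilon>0$, choosing $\ell$ with $\norm{\bar y-y_\ell}<\varepsilon/3$ and then $K$ large enough that $\bigl|\norm{y^k-y_\ell}-\norm{y^{k'}-y_\ell}\bigr|<\varepsilon/3$ for all $k,k'\ge K$ yields $\bigl|\norm{y^k-\bar y}-\norm{y^{k'}-\bar y}\bigr|<\varepsilon$ for $k,k'\ge K$; hence $\{\norm{y^k-\bar y}\}_k$ is Cauchy and converges. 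I expect the affine-function identity in item (iii) to be the decisive and least obvious step, since it reduces a convex combination of points to a linear average of the quantities $\phi(y_j)$; items (i), (ii), and the limiting argument in item (iv) are standard.
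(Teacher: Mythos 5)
Your proposal is correct and follows essentially the same route as the paper: items (i) and (ii) are identical, your affine-function observation in item (iii) is exactly the paper's cancellation of the quadratic terms after squaring and summing with weights $\lambda_j$, and your item (iv) is the same uniform-approximation argument, merely phrased via the Cauchy criterion rather than the paper's $\liminf$/$\limsup$ sandwich.
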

  
\begin{proof}
    For proving \textbf{(i)}, take any point $y\in M$. From the definition of Fej\'er* monotonicity, we conclude that $y^k$ belongs to the ball with center at $y$ and radius $\lV y^{N(y)}-y\rV$ for all $k\ge N(y)$. Consequently, $\{y^k\}$ is bounded.
   
    Item \textbf{(ii)} is a direct consequence of   \cref{eq:def.Fejer*monotone}, since the $N(y)$-tail of sequence $\left\{\norm{y^k-y}\right\}$  is monotone and bounded, so the sequence converges.

    For item \textbf{(iii)}, take any $y\in \conv(M)$. Thus, $y$ can be written as  $y= \sum_{i=1}^p \lambda_i y_i$, where $y_i\in M$,  $\lambda_i \in [0,1]$ for all $i = 1,2,\ldots,p$, $\sum_{i=1}^p \lambda_i =1$ and $p\in \na$. Taking into account that  $\{y^k\}$ is Fejér* monotone w.r.t.~$M$, for each $i=1, 2,\ldots,p$, there exists $N(y_i)$ such that $\lV y^{k+1}-y_i\rV\le\lV y^k -y_i\rV$ for all $k\ge N(y_i)$. Squaring both sides of the last inequality and rearranging the terms, we get
    \begin{align}   \label{eq:Fejer*monotone_foo}
        0 &\leq \lV{y^k}\rV^2 - \lV{y^{k+1}}\rV^2  - 2 \left(y^{k}-y^{k+1}\right)^\top y_i.
    \end{align}
The above inequality also holds for all $k\ge K \coloneqq \max\{N(y_1), N(y_2), \ldots, N(y_p)\}$. Multiplying \cref{eq:Fejer*monotone_foo}, for each $i = 1,2,\ldots,p$, by the respective $\lambda_i$ and adding up, we obtain
    \begin{align}   
        0 
        &\le \lV{y^k}\rV^2 -\lV{y^{k+1}}\rV^2  - 2 \left(y^{k}-y^{k+1}\right)^\top \left(\sum_{i=1}^p \lambda_i y_i\right) \\ 
        &= \lV{y^k}\rV^2 - \lV{y^{k+1}}\rV^2  - 2 \left(y^{k}-y^{k+1}\right)^\top y,
    \end{align}
    and therefore, $\lV y^{k+1}-y\rV\le\lV y^k - y\rV$, for all $k\ge K$. So $\{y^k\}$ is Fejér* monotone w.r.t.~$\conv(M)$.

    Regarding item \textbf{(iv)}, for any $w\in \conv(M)$, since item \textbf{(iii)} implies that  $\{y^k\}$ is Fejér monotone w.r.t.~$\conv(M)$,   item \textbf{(ii)} yields that the scalar sequence $\{\|y^k-w\|\}$ converges. Take now $\Bar{y} \in \closure{\conv(M)}$, and a sequence $\left\{w^\ell\right\}\subset \conv(M)$ such that
    $w^\ell \rightarrow \Bar{y}$.

Using the triangle inequality, we have, for all $\ell \in \na$,
\begin{align}
  -   \left\|w^\ell-\Bar{y}\right\| \leq 
  \left\|y^k -\Bar{y}\right\| - \left\|y^k -w^\ell\right\|  \leq \left\|w^\ell-\Bar{y}\right\|.
\end{align}
Taking $\liminf$ and $\limsup$ with respect to $k$ in the above inequalities, we obtain
\begin{align}
    -\left\|w^\ell-\Bar{y}\right\| 
    & 
    \leq \liminf_k \left\|y^k -\Bar{y}\right\|-\lim _k \left\|y^k -w^\ell\right\| \\
    & \leq  \limsup_k
     \lV y^k - \Bar{y} \rV -\lim _k \| y^k -w^\ell \|  \leq\left\|w^\ell-\Bar{y}\right\|,
    \end{align}
    because, as aforementioned, item \textbf{(ii)} says that, for every  $\ell \in \na$,  
    $\lim_k\left\|w^\ell-y^k\right\|$ exists.
Now, as $\ell$ goes to infinity, we have $\left\|w^\ell-\Bar{y}\right\|  \to 0$, and we conclude that \[\liminf_k \left\|y^k -\Bar{y}\right\| = \limsup_k \left\|y^k -\Bar{y}\right\|=\lim_k\left\|y^k-\Bar y\right\|,\]
%Hence,  \[\lim _k \left\|y^k-\Bar{y}\right\|=\lim _\ell \lim _k\left\|y^k-w^\ell\right\|,\] 
so that we have the desired result.
\end{proof}

Now, we show the extension of the Fej\'er monotonicity results needed for our analysis. We recall first the notions of R-linear and Q-linear convergence. 
A sequence $\{y^k\}\subset\re^n$ converges \emph{Q-linearly} to $y^\star\in\re^n$ if 
\[
\limsup_{k\to\infty}\frac{\lV y^{k+1}-y^\star\rV}{\lV y^k-y^\star\rV} =\eta< 1,
\]
and \emph{R-linearly} if
\[
\limsup_{k\to\infty}\lV y^k-y^\star\rV^{1/k}=\eta < 1.
\]
The value $\eta$ is said to be the \emph{asymptotic constant}. It is well known that Q-linear convergence implies R-linear convergence with the same asymptotic constant \cite{Ortega:2000}.

\begin{theorem}[Fej\'er* monotonicity and closed convex sets]
    \label{thm.Fejer*monotone_theorem}
    Suppose that $C \subset \re^n$ is a closed convex set with nonempty interior, that is, $\inte(C) \neq \emptyset$.  Assume that $\{y^k\}\subset\re^n$ is Fej\'er* monotone with respect to $\inte(C)$. Then,
    \begin{listi}
% \item the sequence $\{y^k\}$ is bounded;
\item  if there exists a cluster point $\Bar{y}$ of $\{y^k\}$ which belongs to $C$, then the whole sequence $\{y^k\}$ converges to $\Bar y$;

\item  if $\{y^k\}$ converges to $y^\star\in C$ and $\{\dist(y^k,C)\}$ converges Q- or R-linearly to $0$ with asymptotic constant $\eta$, then  $\{y^k\}$ converges R-linearly to $y^\star$ with the same asymptotic constant.

\end{listi}
\end{theorem}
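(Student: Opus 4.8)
The plan is to prove the two items separately, deducing item~(i) quickly from \cref{lem.Fejer*monotone.char} and reserving the real work for the linear-rate statement in item~(ii). For item~(i), the key remark is that, since $C$ is closed and convex with $\inte(C)\neq\emptyset$, one has $\closure{\conv(\inte(C))}=\closure{\inte(C)}=C$. Applying item~(iv) of \cref{lem.Fejer*monotone.char} with $M=\inte(C)$ then guarantees that for every $\Bar y\in C$ the scalar sequence $\{\|y^k-\Bar y\|\}$ converges. Now if $\Bar y\in C$ is a cluster point of $\{y^k\}$, some subsequence satisfies $\|y^{k_j}-\Bar y\|\to 0$; since the full sequence $\{\|y^k-\Bar y\|\}$ already converges, its limit must be $0$, whence $y^k\to\Bar y$. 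I expect no difficulty here.

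For item~(ii), I first recall that Q-linear convergence implies R-linear convergence with the same asymptotic constant, so I may assume $d_k\coloneqq\dist(y^k,C)\to0$ R-linearly with constant $\eta$. Writing $r_k\coloneqq\|y^k-y^\star\|$, the inclusion $y^\star\in C$ gives the trivial lower bound $d_k\le r_k$, hence $\limsup_k r_k^{1/k}\ge\eta$. The whole point is the matching upper bound $\limsup_k r_k^{1/k}\le\eta$, and for this I would establish the existence of a constant $\kappa>0$ and an index $k_0$ with $r_k\le\kappa\,d_k$ for all $k\ge k_0$; this at once yields $\limsup_k r_k^{1/k}\le\lim_k(\kappa d_k)^{1/k}=\eta$ and, together with $d_k\le r_k$, shows that $\{r_k\}$ is R-linear with the \emph{same} constant $\eta$.

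To produce $r_k\le\kappa d_k$, let $p^k\coloneqq P_C(y^k)$ and split $y^k-y^\star=(y^k-p^k)+(p^k-y^\star)$, where $\|y^k-p^k\|=d_k$ is the ``normal part'' and $\tau_k\coloneqq\|p^k-y^\star\|$, $v^k\coloneqq(p^k-y^\star)/\tau_k$, is the ``tangential part''. The single analytic ingredient I would use is that, for a \emph{fixed} $z\in\inte(C)$ and all $k\ge N(z)$, Fej\'er* monotonicity together with $y^k\to y^\star$ forces $\|y^k-z\|\ge\|y^\star-z\|$, which rearranges into $(y^k-y^\star)^\top(z-y^\star)\le\tfrac12 r_k^2$. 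Bounding the normal part by Cauchy--Schwarz, $(y^k-p^k)^\top(z-y^\star)\ge-d_k\|z-y^\star\|$, this gives $\tau_k\,v^{k\top}(z-y^\star)\le\tfrac12 r_k^2+d_k\|z-y^\star\|$, so that $\tau_k$ is controlled by $d_k$ and $r_k^2$ \emph{as soon as} $z$ can be chosen with $v^{k\top}(z-y^\star)$ bounded below by a positive constant.

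The main obstacle is precisely this choice of $z$, and it has two faces: a single interior point cannot control tangential directions $v^k$ nearly orthogonal to it, and the index $N(z)$ in Fej\'er* monotonicity depends on $z$, so $z$ cannot follow the moving point $p^k$. I would overcome both at once by a compactness argument using $\inte(C)\neq\emptyset$. The accumulation directions of $\{v^k\}$ lie in the tangent cone $T_C(y^\star)$, and for every nonzero $v\in T_C(y^\star)$ there exists $z\in\inte(C)$ with $v^\top(z-y^\star)>0$ (otherwise $v$ would lie simultaneously in $T_C(y^\star)$ and in its polar cone, forcing $v=0$). By continuity and compactness of the relevant set of directions, finitely many points $z_1,\dots,z_M\in\inte(C)$ then satisfy $\max_i v^{k\top}(z_i-y^\star)\ge c'>0$ for all large $k$; taking $k_0=\max_i N(z_i)$ renders the index dependence harmless. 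Applying the inequality above to the best $z_i$ yields $\tau_k\le\frac{1}{c'}\bigl(\tfrac12 r_k^2+D_{\max}\,d_k\bigr)$ with $D_{\max}=\max_i\|z_i-y^\star\|$; combining with $r_k\le d_k+\tau_k$ and absorbing the quadratic term $r_k^2=o(r_k)$ for large $k$ gives $r_k\le\kappa\,d_k$, as required.
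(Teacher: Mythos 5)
Your proof of item (i) coincides with the paper's: both reduce it to \cref{lem.Fejer*monotone.char}(iv) via $C=\closure{\conv(\inte(C))}$. For item (ii) you take a genuinely different route to the same target inequality $\lVert y^k-y^\star\rVert\le\kappa\,\dist(y^k,C)$ for large $k$. The paper gets it in two lines: letting $m\to\infty$ in the Fej\'er* inequality gives $\lVert y^k-y^\star\rVert\le 2\lVert y^k-\hat y\rVert$ for $k\ge N(\hat y)$, and then it picks $\hat y\in\inte(C)$ with $\lVert \hat y-P_C(y^k)\rVert\le\dist(y^k,C)$, yielding the explicit constant $\kappa=4$. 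That chosen $\hat y$, however, varies with $k$, while the inequality it is inserted into holds only for $k\ge N(\hat y)$ --- exactly the circularity you identify as ``the main obstacle.'' Your tangent-cone decomposition $y^k-y^\star=(y^k-p^k)+(p^k-y^\star)$, the inequality $(y^k-y^\star)^\top(z-y^\star)\le\tfrac{1}{2}\lVert y^k-y^\star\rVert^2$ for a fixed $z\in\inte(C)$, and the covering of the accumulation directions of $\{v^k\}$ by neighborhoods attached to finitely many fixed interior points $z_1,\dots,z_M$ remove that dependence cleanly, at the price of a longer argument and a non-explicit $\kappa$; so your version is, as written, the more watertight of the two. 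Two minor items to make explicit: the degenerate indices with $p^k=y^\star$ (where $\tau_k=0$ and $v^k$ is undefined, but then $r_k=d_k$ makes the bound trivial), and the observation that your lower bound $\limsup_k r_k^{1/k}\ge\eta$, coming from $d_k\le r_k$, is what actually delivers the ``same asymptotic constant'' clause, which the paper's proof only establishes as the one-sided estimate $\limsup_k r_k^{1/k}\le\eta$.
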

\begin{proof}
%\begin{listi}
%\item

Item \textbf{(i)} is a direct consequence of \Cref{lem.Fejer*monotone.char}(iv), with $\inte(C)$ playing the role of $M$. 
Indeed,  let $\Bar{y}\in C$ be a cluster point of the bounded (due to \Cref{lem.Fejer*monotone.char}(i)) sequence $\{y^k\}$ and say $\{y^{\ell_k}\}$ is a subsequence of $\{ y^k\}$ converging to $\Bar{y}$, that is, $\norm{y^{\ell_k} - \Bar{y}}\to 0$. Note that $C = \closure{\conv(\inte(C))}$, and in view of \Cref{lem.Fejer*monotone.char}(iv), the sequence $\left\{\left\|y^k-\Bar{y}\right\|\right\}$ 
converges. Therefore,
\[
    \lim_{k\to \infty}   \norm{y^k-\Bar{y}} = \lim_{\ell_k\to \infty}   \norm{y^{\ell_k}-\Bar{y}} = 0,
\]
so, the whole sequence $\{y^k\}$ converges to $\Bar{y}$.

For proving \textbf{(ii)} note that Fej\'er* monotonicity of $\{y^k\}$ implies that for each $\hat y\in \inte(C)$ there exists $N(\hat y)$  such that $\lV y^{k+m}-\hat y\rV\le\lV y^k-\hat y\rV$ for all $k\ge N(\hat y)$ and all $m\in\na$. Hence,
\[
\label{eq.thm.Fejer*monotone.1}
    \|y^{k+m}-y^k \| \leq \|y^{k+m} - \hat y \| + \|y^k-\hat y\| \leq 2 \| y^k -\hat y\|,
\]
for all $\hat y\in \inte(C)$, all $k \geq N(\hat y)$ and all $m\in\na$.
Suppose $y_k \to y^{\star}$.  Taking limits in \cref{eq.thm.Fejer*monotone.1} with $m\to\infty$, we get
\[\label{e1}
\lV y^k-y^\star\rV\le 2\lV y^k -\hat y\rV, 
\] 
for all $\hat y\in \inte(C)$ and $k\ge N(\hat y)$.
Let $z^k=P_C(y^k)$,
so that $z^k\in C$. Convexity of $C$ ensures the existence of $\hat y\in \inte(C)$ such that $\lV \hat y-z^k\rV\le\lV y^k-z^k\rV$. Hence, in view of \cref{e1},
\begin{align}
\lV y^k-y^\star\rV & \le 2\lV y^k -\hat y\rV 
\le 2\lV y^k -\hat y\rV \\
& \le 2\lV y^k-z^k\rV+2\lV \hat y-z^k\rV \\ 
& \le 2\lV y^k-z^k\rV+2\lV y^k-z^k\rV \\ 
& = 4\lV y^k-z^k\rV =4\,\dist(y^k,C).
\end{align}
Taking $k$-th roots in both sides of the last inequality, we get 
\[\label{e3}
    \lV y^k-y^\star\rV^{1/k}\le 4^{1/k}\dist(y^k,C)^{1/k}.
\]
In view of the relation between Q-linear and R-linear convergence,
\[
\limsup_{k\to\infty}\lV y^k-y^\star\rV^{1/k}\le \limsup_{k\to\infty}\dist(y^k,C)^{1/k}=\eta<1,
\]
and we are done.
%\end{listi}
\end{proof}

\section{Statement of the algorithm}\label{s3}

We are now ready to introduce the Perturbed Approximate Circumcenter Algorithm (PACA) for the CFP defined in \cref{eq.def.CFP_functionValues}. Let us assume that the sets $C_i$, $i=1,\ldots,m$ in the CFP \cref{eq.CFP_1} take the form described in \cref{eq.def.CFP_functionValues}. Given an arbitrary initial point $x^0 \in \re^n$ and a decreasing sequence of perturbation parameters $\{\epsilon_k\}$ which converges to $0$, the PACA scheme for solving CFP is defined  in \Cref{algo:PACA}.

Next, we provide a somewhat informal explanation of several properties of the sequence $\{x^k\}$ generated by PACA. By examining \cref{eq.def.v_i_k} with $\epsilon_k=0$, and considering \Cref{lem.projection_to_halfspace}, it becomes evident that $x^k-v^{k}_{i}$ represents the orthogonal projection of $x^k$ onto a halfspace containing $C_i$, while $x^k-w^k$ is a convex combination of such projections. With the presence of perturbation parameters $\epsilon_k$, $x^k-v^{k}_{i}$ corresponds to the projection of $x^k$ onto the perturbed set $S^k_i$, defined as:
\[
\label{eq.def.s_i_k}
        S_i^k = 
        \begin{dcases*}
            C_i^k, &  if $x^k \in C_i^k$, \\
            \{z\in \re^n\mid (u_i)^\top(z-x^k)+f_i(x^k) + \epsilon_k \leq 0\}, & otherwise,
        \end{dcases*}
\]
which contains the perturbed convex set 
\[\label{a1}
C_i^k\coloneqq \{x\in\re^n\mid f_i(x)+\epsilon_k\le 0\},
\]
as we will prove in \Cref{lem.CK_SK}.

\begin{algorithm}
    \caption{Perturbed Approximate Circumcenter Algorithm (PACA)}
    \label{algo:PACA}
\begin{itemize}
\item[1)] {\bf Initialization.}

\noindent Take $x^0\in\re^n$.
\item[2)] {\bf Iterative step.}

Given $x^k$, take $u_{i}^{k}\in\partial f_i(x^k)$, the subdifferential of $f_i$ at $x^k$ 
(so, $u_{i}^{k}$ is a subgradient of $f_i$ at $x^k$), and define:\[
    \label{eq.def.v_i_k}
    v^{k}_{i} \coloneqq \left[\frac{\max\{0,f_i(x^k)+\epsilon_k\}}{\lV u_{i}^{k}\rV^2}\right]u_{i}^{k},
\]
\[
    \label{eq.def.w_k}
    w^k \coloneqq \frac{1}{m} \sum_{i=1}^m v^{k}_{i}.
\]
If $w^k=0$, then take $x^{k+1}=x^k$ and proceed to the $(k+1)$-th iteration. Otherwise, define
\[
    \label{eq.def.alpha_k}
\alpha_k \coloneqq \frac{\frac{1}{m}\sum_{i=1}^m\lV v^{k}_{i}\rV^2}{\lV w^k\rV^2},
\] 
\[
    \label{eq.def.PAC}
    x^{k+1} \coloneqq x^k-\alpha_k w^k.
\]

\item[3)] {\bf Stopping criterion.}

If $x^k \in C \coloneqq \bigcap_{i=1}^m C_i$, then stop (in this case, we say that PACA has finite convergence).
\end{itemize}
\end{algorithm}

We mention that,
depending on the value of $\epsilon_k$, the set $C^k_i$ may be empty. On the other hand, $S^k_i$ is always nonempty, so that the sequence $\{x^k\}$ is always well-defined, independently of the value of $\epsilon_k$. 
However, the set $S^k\coloneqq \bigcap_{i=1}^mS^k_i$ may still be empty, in which case the sequence $\{x^k\}$ may exhibit an erratic behavior. It could even happen that $w^k=0$, in which case, as stated in the Iterative Step, we get $x^{k+1}=x^k$.
This only occurs if, by chance, $x^k$ minimizes the function $\sum_{i=1}^m \| x-P_{S^k_i}(x)\|^2$. 
In this case the sequence does not stop: it proceeds to the next iteration, with perturbation parameter $\epsilon_{k+1}<\epsilon_k$. For large enough $k$, however,  the set $S^k$ will become nonempty; this is ensured to happen when the Slater point $\hat x$ gets inside $S^k$ (more precisely, when $k$ is large enough so that $\epsilon_k<\min_{1\le i\le m} \{\lv f_i(\hat x)\rv\}$). At that point, the sequence starts moving toward $S^k$, and \emph{a fortiori} toward $C^k\coloneqq \bigcap_{i=1}^mC^k_i\subset S^k$, so that eventually it converges to a point in $C$. We also observe that $x^k-w^k$ is a convex combination, with equal weights, of the projections onto the perturbed sets $S^k_i$. 

Note that, when $\epsilon_k =0$, the set $S^k$ is related to the surrogate halfspaces studied and used in the algorithms proposed in \cite{Kiwiel:1995, Kiwiel:1997,Combettes:2000}. In particular, the original algorithm proposed in \cite{Kiwiel:1995} used a relaxed projection onto the surrogate (and unperturbed) block $S_k$, with only a part of the $S_i^k$  used at each iteration. Moreover, in the case of PACA, the surrogate sets $S_i^k$ are perturbed, and the next iteration is computed using the circumcenter scheme. Indeed, the factor $\alpha_k$ in the definition of $x^{k+1}$ (\emph{i.e.}, in \cref{eq.def.PAC}) encapsulates the contribution of the circumcentered-reflection approach, as we will discuss in the next section. 
It is easy to verify that if $\epsilon_k = 0$ for all $k$, then the PACA sequence coincides with the CARM sequence in \cite{Araujo:2022}, using the separating operator given in \cite[Ex.~2.7]{Araujo:2022}. If we remove the factor $\alpha_k$ in \cref{eq.def.PAC}, PACA reduces to Algorithm 4 in \cite{Iusem:1986a} with relaxation parameters equal to $1$. In this case, $x^{k+1}$ is simply a convex combination of perturbed approximate projections of $x^k$ onto the sets $C_i$.
We mention that the algorithm in \cite{Iusem:1986a} also enjoys finite convergence, under the same assumptions as in \Cref{thm.finite_convergence} in this paper. It is worth mention that, analogous algorithms to ours and the one in \cite{Iusem:1986a}  were considered in \cite[Ex.~3.2]{Kolobov:2021} and in  \cite[Ex.~4.9]{Kolobov:2022}.

% \end{remark}

\begin{proposition}
\label{cor.boundedness_alphak}
Consider the PACA sequence defined by \cref{algo:PACA}. Then,  
$\alpha_k\ge 1$ for all $k$.
\end{proposition}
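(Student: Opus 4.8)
The plan is to recognize that the claim $\alpha_k \ge 1$ is, after unwinding the definition in \cref{eq.def.alpha_k}, nothing more than a Jensen-type inequality for the squared Euclidean norm. Recall that $\alpha_k$ is only defined when $w^k \neq 0$, so the denominator $\lV w^k\rV^2$ is strictly positive and the ratio is well-defined. Since $w^k = \frac{1}{m}\sum_{i=1}^m v_i^k$ by \cref{eq.def.w_k}, establishing $\alpha_k \ge 1$ is equivalent to proving the single inequality
\[
    \left\lVert \frac{1}{m}\sum_{i=1}^m v_i^k \right\rVert^2 \le \frac{1}{m}\sum_{i=1}^m \lV v_i^k\rV^2.
\]

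First I would reduce the statement to this inequality by clearing the positive denominator: $\alpha_k \ge 1$ holds if and only if $\frac{1}{m}\sum_{i=1}^m \lV v_i^k\rV^2 \ge \lV w^k\rV^2$, which is exactly the display above. The inequality itself is the convexity of $t\mapsto \lV t\rV^2$ applied to the uniform average of the $v_i^k$, and it requires no structural information about the $v_i^k$ beyond their being vectors in $\re^n$; in particular the explicit formula \cref{eq.def.v_i_k} is irrelevant here.

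To make the argument self-contained I would prove the inequality directly by the variance identity. Expanding the square of the average gives $\lV w^k\rV^2 = \frac{1}{m^2}\sum_{i,j} (v_i^k)^\top v_j^k$, and a short computation yields
\[
    \frac{1}{m}\sum_{i=1}^m \lV v_i^k\rV^2 - \lV w^k\rV^2 = \frac{1}{2m^2}\sum_{i,j=1}^m \lV v_i^k - v_j^k\rV^2 \ge 0,
\]
since every summand on the right is nonnegative. This establishes the desired inequality, hence $\alpha_k \ge 1$, for every iteration at which $\alpha_k$ is defined.

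There is essentially no obstacle in this proof; it is a one-line convexity fact dressed in the algorithm's notation. The only point that merits a word of care is the edge case $w^k = 0$, where $\alpha_k$ is simply not defined and the algorithm sets $x^{k+1} = x^k$ instead; the statement is therefore understood to quantify over those $k$ for which the iterate is produced via \cref{eq.def.alpha_k,eq.def.PAC}, and the bound $\alpha_k \ge 1$ is exactly what will later be needed to interpret the step \cref{eq.def.PAC} as an over-relaxed (circumcentered) version of the plain convex-combination projection step.
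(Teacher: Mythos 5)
Your proof is correct and follows essentially the same route as the paper, which likewise deduces $\alpha_k\ge 1$ from \cref{eq.def.w_k,eq.def.alpha_k} and the convexity of $\lV\cdot\rV^2$. The only difference is that you additionally verify the Jensen inequality explicitly via the variance identity and flag the $w^k=0$ edge case, both of which are fine but not needed beyond what the paper states.
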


\begin{proof}
The result follows directly from
\cref{eq.def.w_k,eq.def.alpha_k}, and the convexity of the function $\lV\cdot \rV^2$.
\end{proof}

\Cref{cor.boundedness_alphak} indicates that the PACA sequence can be seen as an overrelaxed version of the sequence given by $x^{k+1}=x^k-w^k$, studied in \cite{Iusem:1986a}. We comment that the particular value of $\alpha_k$ determined in \cref{eq.def.alpha_k}, related to the circumcenter approach,  has special consequences in terms of the performance of the method.
Moreover,  the relaxation $\alpha_k$ can be equivalently interpreted as the extrapolation parameter for the simultaneous projection $\frac{1}{m} \sum_{i=1}^m P_{S_i^k}$, where the $S_i^k$'s are perturbed versions of the original sublevel sets; see \cref{eq.def.s_i_k}. Indeed, we have $v^{k}_i=$ $x^k-P_{S_i^k}\left(x^k\right)$. Consequently, \[w^k=\frac{1}{m} \sum_{i=1}^m v^{k}_i=\frac{1}{m} \sum_{i=1}^m\left(x^k-P_{S_i^k}\left(x^k\right)\right),\]and
\[
\alpha_k=\frac{\frac{1}{m} \sum_{i=1}^m\left\|v^{k}_i\right\|^2}{\left\|w^k\right\|^2}=\frac{\frac{1}{m} \sum_{i=1}^m\left\|x^k-P_{S_i^k}\left(x^k\right)\right\|^2}{\left\|\frac{1}{m} \sum_{i=1}^m\left(x^k-P_{S_i^k}\left(x^k\right)\right)\right\|^2} .
\]
An analogous expression appears in the extrapolated method of parallel projections; 
see \cite[eq.~(1.9)]{Combettes:1997b}. A comprehensive study of extrapolation techniques 
for iterative methods  can be found in \cite[sects.~4.9 and 5.10]{Cegielski:2012}.

\section{Convergence Analysis}\label{s4} For the convergence analysis, it is necessary to establish a connection between PACA and the CRM method applied to a CFP in the product space $\re^{nm}$. This allows us to utilize Pierra's formulation, which transforms a CFP with $m$ sets in $\re^n$ into a CFP with only two sets in $\re^{nm}$, with one of them being a linear subspace.

Consider $C^k_i$ as defined in \cref{a1} (once again, we recall that $C^k_i$ may be 
empty for some values of $\epsilon_k$) and $S_i^k$, as defined in \cref{eq.def.s_i_k}.
Using \Cref{lem.projection_to_halfspace}, the projection of $x^k$ onto  $S_i^k$ is given by:
\[
\label{eq.def.Approximate_projection_onto_C_epsilon}
 P_{S_i^k}(x^k) = x^k - \frac{\max\{0,f_i(x^k)+\epsilon_k\} } {\|u_{i}^{k}\|^2} u_{i}^{k}, 
\]
with $u_{i}^{k} \in \partial f_i(x^k)$. \emph{i.e.}, $u_{i}^{k}$ is a subgradient of $f_i$ at $x^k$. We remark that
\cref{eq.def.Approximate_projection_onto_C_epsilon} also works when $x^k$ belongs to $C^k_i$, in which case $S_i^k$ is not a halfspace, but just the set $C_i^k$, and  
$P_{S_i^k}(x^k) = x^k$.

Now we introduce the appropriate sets in $\re^{nm}$ for using
Pierra's formulation. Define: 
\[
    \label{eq.def.bf_S_k}
    \bSS^k = S_1^k\times \dots \times S_m^k,
\]
and 
\[
    \label{eq.def.bf_C_k}
    \CC ^k = C_1^k \times \dots \times C_m^k.
\]
We need an elementary result on the relation between $\CC ^k$ and $\bSS^k$.
\begin{proposition}
\label{lem.CK_SK}
    Take $\bSS^k,\CC ^k \subset \re^{nm}$ as defined in \cref{eq.def.bf_C_k,eq.def.bf_S_k}, respectively. Then,
    \[
    \label{eq.relationship_Ck_Sk}
        \CC ^k \subset \bSS^k.
    \]
\end{proposition}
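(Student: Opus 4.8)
The plan is to prove the inclusion $\CC^k \subset \bSS^k$ by reducing it to a componentwise containment. Since both sets are Cartesian products over the index $i = 1, \ldots, m$, namely $\CC^k = C_1^k \times \cdots \times C_m^k$ and $\bSS^k = S_1^k \times \cdots \times S_m^k$, it suffices to establish the fiberwise inclusion $C_i^k \subset S_i^k$ for each fixed $i$. Once this is in hand, the product inclusion follows immediately, because a point $\vz = (z_1, \ldots, z_m)$ lies in $\CC^k$ precisely when each $z_i \in C_i^k$, which then forces each $z_i \in S_i^k$, i.e.\ $\vz \in \bSS^k$.

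So the core of the argument is to show $C_i^k \subset S_i^k$, and here I would split into the two cases defining $S_i^k$ in \cref{eq.def.s_i_k}. In the case $x^k \in C_i^k$, we have $S_i^k = C_i^k$ by definition, so the inclusion is trivially an equality. The substantive case is $x^k \notin C_i^k$, where $S_i^k$ is the halfspace $\{z \mid (u_i^k)^\top(z - x^k) + f_i(x^k) + \epsilon_k \le 0\}$ with $u_i^k \in \partial f_i(x^k)$. Here I would take an arbitrary $z \in C_i^k$, so that $f_i(z) + \epsilon_k \le 0$ by \cref{a1}, and invoke the subgradient inequality for the convex function $f_i$ at $x^k$, namely $f_i(z) \ge f_i(x^k) + (u_i^k)^\top(z - x^k)$.

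Chaining these two facts gives
\[
    (u_i^k)^\top(z - x^k) + f_i(x^k) + \epsilon_k \le f_i(z) + \epsilon_k \le 0,
\]
which is exactly the condition for $z \in S_i^k$ in the second branch of \cref{eq.def.s_i_k}. Thus $C_i^k \subset S_i^k$ in both cases, completing the fiberwise inclusion and hence the product inclusion.

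I do not expect a genuine obstacle in this proof, as the result is labeled elementary and the mechanism is a single application of the subgradient inequality combined with the definition of $C_i^k$. The only point requiring minor care is the case split in the definition of $S_i^k$: one must notice that the first branch makes the inclusion hold by definition, while the second branch is where the convexity of $f_i$ genuinely enters. A secondary subtlety worth flagging is that $C_i^k$ may be empty for certain values of $\epsilon_k$ (as the paper itself notes), but this poses no difficulty since the empty set is trivially contained in any set.
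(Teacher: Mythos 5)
Your proof is correct and follows essentially the same route as the paper's: reduce to the componentwise inclusion $C_i^k \subset S_i^k$, treat the case $x^k \in C_i^k$ as trivial by definition, and in the remaining case chain the subgradient inequality with $f_i(z) + \epsilon_k \le 0$ to land in the halfspace. Your handling of the possibly empty $C_i^k$ matches the paper's opening remark, so there is nothing to add.
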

\begin{proof}
The result holds trivially if $\CC ^k$ is empty (\emph{i.e.}, if $C_i^k=\emptyset$ for some $i$). Otherwise, it suffices  that $C_i^k\subset S_i^k$ for all $i$. Take any point $z \in C_i^k$, so that $f_i(z) +\epsilon_k \leq 0$. If $x^k \in C_i^k$, then the result is satisfied by the definition of $S_i^k$. If $x^k \notin S_i^k$, since the $f_i$'s are convex, then we have
    \[\label{a2}
        (u_{i}^{k})^\top(z-x^k) + f_i(x^k ) +\epsilon_k \leq f_i(z)+ \epsilon^k \leq 0, 
    \]
    for all $z\in\re^n$, using \cref{a1} and the definition of subgradient. The result follows from \cref{a2} and the definition of $S^k_i$ (\emph{i.e.}, \cref{eq.def.s_i_k}), since $z\in S_i^k$ for all $i$.
\end{proof}

Recall that  $\DD\subset\re^{nm}$ is the \emph{diagonal subspace} in $\re^{nm}$.
In the next lemma, we will prove that the sequence $\{\vx^k\}\subset\re^{nm}$ generated by {CRM} with the sets $\bSS^k$ and $\DD$ is related to the sequence $\{x^k\}\subset\re^n$ generated by PACA with the sets $C^k_i$. In fact, we will get that $\vx^k=(x^k,x^k,\dots ,x^k)$ for all $k$.  

\begin{lemma}
\label{lem.PAC_and_CRM}
    Assume that, for $i=1,\ldots, m$, $C_i\subset \re^n$ is in the form of \cref{eq.def.CFP_functionValues} with each $f_i:\re^n\to\re$ convex. Consider the set $\bSS^k$, as defined in \cref{eq.def.bf_S_k}, the diagonal set $\DD$, and the sequence $\{\vx^k\}\subset\re^{nm}$  generated by CRM given in \cref{eq.definition_Circumcenter} and starting from a point $\vx^0  \in \DD$, 
    \emph{i.e.},
        \[
        \label{eq.def.CRM_iteration}
        \vx^{k+1} \coloneqq \CRMOp_{\bSS^k ,\DD}(\vx^k) = \circum(\vx^k,R_{\bSS^k}(\vx^k),R_{\DD}(R_{\bSS^k}(\vx^k))),
    \]
    where
    $R_{\bSS^k} \coloneqq 2P_{\bSS^k} - \Id$, $R_\DD \coloneqq 2P_\DD-\Id$. Let $\{x^k\} \subset \re^n$ be generated by PACA as in \cref{eq.def.PAC} starting from $x^0 \in \re^n$. 
    If $\vx^0 = (x^0,\ldots,x^0)$, then $\vx^k = (x^k,\ldots,x^k)$ for all $k\geq 1$. Also, 
    PACA stops at iteration $k$ if and only if CRM does so.   
\end{lemma}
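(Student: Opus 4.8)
The plan is to establish both claims by induction on $k$, reducing the entire argument to one explicit circumcenter computation. The base case $\vx^0=(x^0,\ldots,x^0)$ is the hypothesis, so everything hinges on the inductive step: assuming $\vx^k=(x^k,\ldots,x^k)\in\DD$, I will show that $\CRMOp_{\bSS^k,\DD}(\vx^k)$ is again a diagonal point whose common block is exactly the PACA iterate $x^{k+1}=x^k-\alpha_k w^k$ of \cref{eq.def.PAC}. I deliberately avoid invoking \cref{lem.CRM}, since that result assumes $\bSS^k\cap\DD\neq\emptyset$, which may fail here (the block $S^k$ can be empty for small $k$); instead I compute the circumcenter of the three defining points directly from the equidistance conditions, a construction valid whether or not the sets intersect.

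First I would unwind the three points entering $\circum$ in \cref{eq.definition_Circumcenter}. Since $\bSS^k=S_1^k\times\cdots\times S_m^k$ is a product set, $P_{\bSS^k}$ acts blockwise, and on the diagonal input $\vx^k$ each block is given by \cref{eq.def.Approximate_projection_onto_C_epsilon} (which covers both the halfspace case and the case $x^k\in C_i^k$); thus $P_{\bSS^k}(\vx^k)=\vx^k-\mathbf{v}^k$, where $\mathbf{v}^k\coloneqq(v_1^k,\ldots,v_m^k)$ with $v_i^k$ as in \cref{eq.def.v_i_k}. Hence the first reflection is $\mathbf{p}\coloneqq R_{\bSS^k}(\vx^k)=\vx^k-2\mathbf{v}^k$. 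Because $\DD$ is the diagonal subspace, $P_\DD$ is the blockwise average, so $P_\DD(\mathbf{p})=\vx^k-2\mathbf{w}^k$ with $\mathbf{w}^k\coloneqq(w^k,\ldots,w^k)$ and $w^k=\tfrac1m\sum_i v_i^k$ as in \cref{eq.def.w_k}; consequently $\mathbf{q}\coloneqq R_\DD(\mathbf{p})=\vx^k-4\mathbf{w}^k+2\mathbf{v}^k$. The three points are therefore $\vx^k$, $\mathbf{p}=\vx^k-2\mathbf{v}^k$, and $\mathbf{q}=\vx^k-4\mathbf{w}^k+2\mathbf{v}^k$.

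The heart of the proof is the circumcenter solve. Writing the circumcenter as $\vx^k+s\,d_1+t\,d_2$ with $d_1\coloneqq\mathbf{p}-\vx^k=-2\mathbf{v}^k$ and $d_2\coloneqq\mathbf{q}-\vx^k=-4\mathbf{w}^k+2\mathbf{v}^k$, the equidistance conditions $(s\,d_1+t\,d_2)^\top d_j=\tfrac12\norm{d_j}^2$, $j=1,2$, form a $2\times2$ linear system in $(s,t)$. The crucial simplification is the identity $\sum_i v_i^k=m\,w^k$, which collapses the Gram data to $\norm{d_1}^2=\norm{d_2}^2=4\sum_i\norm{v_i^k}^2$ and $d_1^\top d_2=8m\norm{w^k}^2-4\sum_i\norm{v_i^k}^2$. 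The resulting symmetric system forces $s=t=\frac{\sum_i\norm{v_i^k}^2}{4m\norm{w^k}^2}$, and since $d_1+d_2=-4\mathbf{w}^k$, the circumcenter equals $\vx^k-\alpha_k\mathbf{w}^k$ with $\alpha_k$ exactly as in \cref{eq.def.alpha_k}; this is the diagonal point with common block $x^k-\alpha_k w^k=x^{k+1}$, closing the induction.

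I expect the main obstacle to be precisely this computation: spotting that $\sum_i v_i^k=m\,w^k$ reduces the Gram matrix to the symmetric form above, so that the circumcenter's entire contribution condenses into the scalar $\alpha_k$. I also need to dispatch two degenerate configurations. If $w^k=0$, the three points are collinear and PACA sets $x^{k+1}=x^k$ by convention, and the CRM step makes the same assignment, so the correspondence persists. If all the $v_i^k$ coincide, then $\mathbf{p}=\mathbf{q}$ and $\alpha_k=1$, and the circumcenter degenerates to the midpoint of $\vx^k$ and $\mathbf{p}$, which still equals $\vx^k-\mathbf{w}^k=\vx^k-\alpha_k\mathbf{w}^k$; thus the formula is uniform. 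Finally, the stopping equivalence is immediate once $\vx^k=(x^k,\ldots,x^k)$ is established: the finite-termination event is membership of the current iterate in the solution set, namely $x^k\in C$ for PACA and $\vx^k\in\CC\cap\DD$ for CRM, and by \cref{eq.Product_Space1} these two conditions coincide.
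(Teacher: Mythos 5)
Your proof is correct and follows essentially the same route as the paper's: induction on $k$, blockwise unwinding of $P_{\bSS^k}$, $R_{\bSS^k}$, $P_\DD$ and $R_\DD$ on a diagonal input, and an explicit circumcenter computation showing that the new iterate is the diagonal point with common block $x^k-\alpha_k w^k$. Two of your choices are worth highlighting because they are tighter than the published argument. First, your third point $\mathbf{q}=\vx^k-4\mathbf{w}^k+2\mathbf{v}^k$ is the correct value of $R_\DD(R_{\bSS^k}(\vx^k))$; the paper's corresponding display drops the $+2v^k_i$ terms, and its subsequent claim that every ${\bf e}(\alpha)=\vx^k-\alpha\mathbf{w}^k$ is equidistant from the two reflected points holds only with your formula, so your version is the one that actually closes the computation. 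Second, you deliberately avoid invoking \cref{lem.CRM}(ii) for well-definedness, which is prudent: that lemma assumes the two sets intersect, and $\bSS^k\cap\DD$ can be empty for small $k$, whereas your direct $2\times 2$ Gram solve (using $\sum_i v^k_i=m\,w^k$ to get $\lV d_1\rV^2=\lV d_2\rV^2$, hence $s=t$, hence the circumcenter $\vx^k+s(d_1+d_2)=\vx^k-4s\,\mathbf{w}^k$) needs no such hypothesis; the paper instead verifies an ansatz after citing that lemma. The only loose end, shared with the paper, is the case $w^k=0$ with some $v^k_i\neq 0$: there the three points are distinct and collinear, so the circumcenter literally does not exist, and the identification of the two sequences at that iteration holds only by extending the convention $\vx^{k+1}=\vx^k$ to the CRM side; saying the CRM step ``makes the same assignment'' slightly overstates this, but it is no worse than the paper's own treatment.
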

\begin{proof}
    Since $\vx^0 \in \DD$, which is an affine subspace, and  $\bSS^k$ is a closed convex set, we get (inductively) that, for all $k\geq 0$, not only $\vx^k$ is computable (therefore, sequence $\{\vx^k\}$ is well-defined) but also  $\vx^k \in \DD$; see \Cref{lem.CRM}(ii). 

   Suppose now that $\vx^k \coloneqq (y^k,\ldots,y^k)$ with $y^k \in \re^n$. Since $\vx^0\coloneqq (x^0, \dots ,x^0)$, we assume inductively that $y^k=x^k$, and we must prove that $\vx^{k+1}=(x^{k+1}, \dots ,x^{k+1})$. We proceed to compute the arguments of the right-hand side of \cref{eq.def.CRM_iteration} in order to get $\CRMOp_{\bSS^k ,\DD}(\vx^k)$.
    By \cref{eq.def.bf_S_k},  we get
    \[\label{e5}
    P_{\bSS^k}(\vx^k)=\left(P_{S^k_1}(x^k),\dots ,P_{S^k_m}(x^k)\right).
    \]
    Moreover, \cref{eq.def.Approximate_projection_onto_C_epsilon} and
    \cref{eq.def.v_i_k} yields
    \[\label{e6}
    P_{S_i^k}(x^k) = x^k -\frac{\max\{0,f_i(x^k)+\epsilon_k\}} {\lV u_{i}^{k}\rV^2}u_{i}^{k}=x^k-v^{k}_{i},
    \]
    with $u_{i}^{k}\in \partial f_i(x^k)$.
    It follows from \cref{e5} and \cref{e6} that
    \[\label{e7}
     P_{\bSS^k}(\vx^k)=\left(x^k-v^{k}_1,\dots ,x^k-v^{k}_m\right).
     \]
     In view of \cref{e7} and of the definition of the reflection operator, we have 
     \[\label{e8}
     R_{\bSS^k}(\vx^k)=\left(x^k-2v^{k}_1,\dots ,x^k-2v^{k}_m\right).
     \]
     It follows from the definition of the diagonal subspace $\DD$ that $P_\DD(x^1, \dots ,x^m)=(z, \dots, z)$
     with $z=(1/m)\sum_{i=1}^mx^i$. 
     Hence, in view of \cref{eq.def.w_k},
     \[\label{e9}
     P_\DD(\vx^k)=(x^k-2w^k, \dots ,x^k-2w^k).
     \]
     Finally, in view of \cref{e9},
     \[\label{e10}
     R_\DD\left(R_{\bSS^k}(\vx^k)\right)=(x^k-4w^k, \dots ,x^k-4w^k).
     \]
    
     Now, by the definition of circumcenters, 
    $\CRMOp_{\bSS^k ,\DD}(\vx^k)$
    must be equidistant to $\vx,\vy$ and $\vz$,
    with $\vy\coloneqq R_{\bSS^k}(\vx)$ as in \cref{e8}  and
    $\vz \coloneqq R_\DD\left(R_{\bf S^k}(\vx^k)\right)$
    as in \cref{e10}. 
    An elementary but somewhat lengthy calculation using \cref{e8} and \cref{e10} shows that any vector of the form 
    ${\bf e}(\alpha) =(x^k-\alpha w^k, \dots ,x^k-\alpha w^k)$ with $\alpha\in\re$ is equidistant from $\vy$ and $\vz$. 
    
    Note that the equation    $\lV{\bf e}(\alpha) -\vx\rV^2=\lV {\bf e}(\alpha)-\vz\rV^2$, leads, after some elementary calculations, to 
    a linear equation in $\alpha$, whose solution is
    $\alpha=\alpha_k$, with $\alpha_k$ as in \cref{eq.def.alpha_k}. Hence, taking into account 
    \cref{eq.def.PAC},
    we get
    \[
    \vx^{k+1}=\CRMOp_{\bSS^k ,\DD}(\vx^k)
    =(x^k-\alpha_k w^k, \dots ,x^k-\alpha_k w^k)=(x^{k+1}, \dots ,x^{k+1}),
    \]
    completing the inductive step. The equivalence between the stopping criteria of both algorithms is immediate.
    \end{proof}

    Another useful result is the quasi-nonexpansiveness of the CRM operator $\CRMOp_{\bSS^k ,\DD}$, with respect to $\CC^k$.

 \begin{lemma}[Quasi-nonexpansiveness of $\CRMOp_{\bSS^k ,\DD}$]
    \label{cor.nonexpansiveness_CRM}
    Let $\bSS^k,\CC ^k \subset \re^{nm}$ be defined as in \cref{eq.def.bf_C_k,eq.def.bf_S_k}, respectively. 
    Consider the operator $\CRMOp_{\bSS^k ,\DD}$ as defined in \cref{eq:def.CRM_operator} and assume that $\CC ^k\ne\emptyset$. Then,
    \[
        \label{eq.cor.nonexpansiveness_CRM}
        \lV \CRMOp_{\bSS^k ,\DD}(\vx)-{\bf s}\rV^2\le\lV\vx-{\bf s}\rV^2-\lV \CRMOp_{\bSS^k ,\DD}(\vx)-\vx\rV^2,
    \]
    for all $\vx\in\re^{nm}$ and all ${\bf s} \in \CC ^k$.
\end{lemma}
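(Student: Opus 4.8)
The plan is to recognize that the asserted inequality \cref{eq.cor.nonexpansiveness_CRM} is exactly the conclusion of \Cref{lem.CRM}(iii) once we specialize to $A=\bSS^k$ and $B=\DD$. Thus the whole argument reduces to two things: checking that this pair meets the hypotheses of \Cref{lem.CRM}, and observing that every admissible test point there is automatically available for every ${\bf s}\in\CC^k$.

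First I would verify the structural hypotheses. By \cref{eq.def.s_i_k}, each factor $S_i^k$ is either the sublevel set $C_i^k$ or a halfspace (cf.~\Cref{lem.projection_to_halfspace}); in either case it is closed and convex, so the product $\bSS^k=S_1^k\times\dots\times S_m^k$ is closed and convex, while $\DD$, being a linear subspace, is an affine manifold. This supplies the ``$A,B$ closed convex, $B$ affine'' requirements. For the remaining hypothesis, the nonempty intersection $\bSS^k\cap\DD\neq\emptyset$, I would combine the assumption $\CC^k\neq\emptyset$ with \Cref{lem.CK_SK} ($\CC^k\subset\bSS^k$). Here care is needed: what \Cref{lem.CRM} actually requires is a single $x$ with $(x,\dots,x)\in\bSS^k$, equivalently $x\in\bigcap_i S_i^k$, rather than merely the nonemptiness of each factor $S_i^k$; the clean way to guarantee this is to use a diagonal point of $\CC^k$, which produces a point of $\bSS^k\cap\DD$.

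With the hypotheses in place, \Cref{lem.CRM}(iii) applied to $A=\bSS^k$, $B=\DD$ gives
\[
\lV \CRMOp_{\bSS^k,\DD}(\vx)-{\bf s}\rV^2\le\lV\vx-{\bf s}\rV^2-\lV \CRMOp_{\bSS^k,\DD}(\vx)-\vx\rV^2
\quad\text{for every } {\bf s}\in\bSS^k ,
\]
and since $\CC^k\subset\bSS^k$ by \Cref{lem.CK_SK}, this holds in particular for every ${\bf s}\in\CC^k$, which is precisely \cref{eq.cor.nonexpansiveness_CRM}.

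The step I expect to be the main obstacle is a domain mismatch: \Cref{lem.CRM}(iii) is stated only for $x\in B=\DD$, whereas \Cref{cor.nonexpansiveness_CRM} claims the inequality for \emph{all} $\vx\in\re^{nm}$. I would resolve this in one of two ways. Either I invoke the fact, standard for circumcentered-reflection operators whose second set is affine, that $\CRMOp_{\bSS^k,\DD}$ is firmly quasi-nonexpansive on the whole space, so that the restriction $\vx\in\DD$ can be dropped; or I observe that, by \Cref{lem.PAC_and_CRM}, the operator is only ever evaluated along the PACA iterates, which remain in $\DD$, so the effective content of the lemma is the case $\vx\in\DD$, covered directly by \Cref{lem.CRM}(iii). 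The companion delicate point, flagged above, is ensuring $\bSS^k\cap\DD\neq\emptyset$ genuinely from the hypothesis, since nonemptiness of each $S_i^k$ alone does not suffice.
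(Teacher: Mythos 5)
Your proof is correct and essentially identical to the paper's, which simply notes that $\bSS^k$ is closed and convex and then invokes \Cref{lem.CRM}(iii) together with the inclusion $\CC^k\subset\bSS^k$ from \Cref{lem.CK_SK}, with $\bSS^k$ as $A$ and $\DD$ as $B$. The two delicate points you flag --- the restriction $\vx\in B=\DD$ in \Cref{lem.CRM} versus ``all $\vx\in\re^{nm}$'' here, and the fact that $\CC^k\neq\emptyset$ (a product condition) does not by itself yield a diagonal point witnessing $\bSS^k\cap\DD\neq\emptyset$ --- are genuine imprecisions in the lemma's statement that the paper's own two-line proof does not address; they are harmless in the paper's actual use of the lemma, where the operator is only ever applied to PACA iterates lying in $\DD$ and only for ${\bf s}\in\CC^k\cap\DD$ (which presupposes the common point you ask for).
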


\begin{proof}
Since $S_i^k$ is closed and convex by definition, then $\bSS^k$ is also closed and convex. The result follows immediately from \Cref{lem.CRM}(iii) and \Cref{lem.CK_SK}, with $\bSS^k$ playing the role of $A$ and $\DD$, the role of $B$.     
\end{proof}

Next, we prove that the sequence generated by PACA is Fej\'er* monotone with respect to $\inte(C)$.

\begin{lemma}
\label{lem.Fejer*monotoness_PAC}
Consider the  CFP with the $C_i$'s of the form \cref{eq.def.CFP_functionValues}. Suppose that $\{ x^k\}\subset\re^n$ is the sequence generated by PACA starting at $x^0 \in \re^n$. If the sequence $\{x^k\}$ is infinite, then,
\begin{listi}
\item 
$\{x^k\}$ is Fej\'er* monotone with respect to $\inte(C)$;
\item $\lim_{k\to\infty}\lV x^{k+1}-x^k\rV=0$;
\item
The sequence $\{x^k\}$ is bounded.
\end{listi}
\end{lemma}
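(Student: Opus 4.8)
My plan is to push everything through the product-space correspondence of \Cref{lem.PAC_and_CRM}, which identifies the PACA iterate $x^k$ with the CRM iterate $\vx^k = (x^k,\ldots,x^k) = \CRMOp_{\bSS^k,\DD}(\vx^{k-1})$, and then to lean on the quasi-nonexpansiveness inequality of \Cref{cor.nonexpansiveness_CRM}. The single fact driving all three items is that any point $y \in \inte(C)$ eventually becomes a valid anchor for that inequality, in the sense that its diagonal lift $\vz := (y,\ldots,y)$ lands in $\CC^k$ for all large $k$.

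First I would record that $\inte(C) \neq \emptyset$: the Slater point $\hat x$ of \Cref{def.Slater_condition} satisfies $f_i(\hat x) < 0$ for all $i$, and since each convex $f_i$ is continuous on $\re^n$, the open sublevel sets $\{f_i < 0\}$ place $\hat x$ in $\inte(C)$. Next, fixing an arbitrary $y \in \inte(C)$, I would set $\delta := \min_{1 \le i \le m}\{-f_i(y)\} > 0$ and use $\epsilon_k \downarrow 0$ to produce an index $N(y)$ with $\epsilon_k < \delta$ whenever $k \ge N(y)$; for such $k$ one has $f_i(y) + \epsilon_k < 0$ for every $i$, i.e.\ $y \in C_i^k$, so that $\vz \in \CC^k$ and in particular $\CC^k \neq \emptyset$. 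This is the only place where the Slater hypothesis and the decay of $\{\epsilon_k\}$ enter, and it is what forces the tail structure in the definition of Fej\'er* monotonicity.

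From here the three claims fall out of one inequality. For $k \ge N(y)$, \Cref{cor.nonexpansiveness_CRM} applies with $\mathbf{s} = \vz \in \CC^k$ and gives
\[
\lV \vx^{k+1} - \vz \rV^2 \le \lV \vx^k - \vz \rV^2 - \lV \vx^{k+1} - \vx^k \rV^2.
\]
Because $\vx^k - \vz = (x^k-y,\ldots,x^k-y)$, I would use the identity $\lV \vx^k - \vz \rV^2 = m\lV x^k - y \rV^2$ throughout. Discarding the nonnegative last term yields $\lV x^{k+1} - y \rV \le \lV x^k - y \rV$ for all $k \ge N(y)$, which is item (i); item (iii) is then immediate from \Cref{lem.Fejer*monotone.char}(i). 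For item (ii), \Cref{lem.Fejer*monotone.char}(ii) guarantees that $\{\lV x^k - y \rV\}$ converges, hence so does $\{\lV \vx^k - \vz \rV^2\}$; rearranging the displayed inequality as
\[
\lV \vx^{k+1} - \vx^k \rV^2 \le \lV \vx^k - \vz \rV^2 - \lV \vx^{k+1} - \vz \rV^2
\]
shows the right-hand side tends to $0$, so $\lV x^{k+1} - x^k \rV \to 0$.

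The only genuine care needed is bookkeeping rather than hard analysis. I would dispatch the degenerate step $w^k = 0$ (where the algorithm sets $x^{k+1} = x^k$) separately, noting the displayed inequalities then hold trivially. The real conceptual obstacle is that \Cref{cor.nonexpansiveness_CRM} is available only once $\CC^k$ is nonempty, so the distance-decrease property is inherently a \emph{tail} property with a cutoff $N(y)$ depending on $y$; this is exactly why the natural and correct conclusion is Fej\'er* monotonicity rather than ordinary Fej\'er monotonicity.
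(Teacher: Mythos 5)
Your proposal is correct and follows essentially the same route as the paper's own proof: lift to the product space via \Cref{lem.PAC_and_CRM}, apply the quasi-nonexpansiveness inequality of \Cref{cor.nonexpansiveness_CRM} with the diagonal lift of a point $y\in\inte(C)$ as anchor (valid for $k\ge N(y)$ once $\epsilon_k<\min_i\lv f_i(y)\rv$), descend to $\re^n$ using $\lV\vx^k-\vz\rV^2=m\lV x^k-y\rV^2$, and read off items (i)--(iii) from the resulting tail inequality together with \Cref{lem.Fejer*monotone.char}. The only cosmetic difference is that you make the $w^k=0$ degenerate step and the continuity argument for $\inte(C)\neq\emptyset$ explicit, which the paper leaves implicit.
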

\begin{proof}
%\begin{listi}
%\item
For item {\bf(i)}, let $\{\vx^k\}\subset\re^{nm}$ be the sequence generated by CRM as in \cref{eq.def.CRM_iteration} starting at $\vx^0 = (x^0,\ldots,x^0)$.
By definition of $\vx^0$, it belongs to $\DD \subset \re^{nm}$. By \Cref{lem.CRM}(ii) and \cref{eq.def.CRM_iteration}, $\vx^k \in \DD$ for all $k$. Define 
\[\label{e13}
C^k\coloneqq \bigcap_{i=1}^m C_i^k.
\]
From \Cref{cor.nonexpansiveness_CRM} and \cref{eq.def.CRM_iteration}, we get
\[
    \label{eq.lem.Fejer*monotoness_PAC.5}
\lV\vx^{k+1} -{\bf s}\rV^2\le\lV\vx^k-{\bf s}\rV^2-\lV\vx^{k+1}-\vx^k\rV^2,
\]
for all ${\bf s} \in \CC ^k\cap \DD$. Since $\{\vx^k\}\subset\DD$ and ${\bf s}\in\DD$, we
have, in view of \Cref{lem.PAC_and_CRM}, $\vx^k=(x^k, \dots ,x^k)$, $\vx^{k+1}=(x^{k+1}, \dots,x^{k+1})$ and ${\bf s} =(s,\dots ,s)$ with $s\in C^k\subset\re^n$ as defined in \cref{e13}, so that
$\lV \vx^{k+1}-{\bf s}\rV^2= m\lV x^{k+1}-s\rV^2$, $\lV \vx^k-{\bf s}\rV^2=m\lV x^k-s\rV^2$ and $\lV \vx^{k+1}-\vx^k\rV^2=m\lV x^{k+1}-x^k\rV^2$. Hence, it follows from \cref{eq.lem.Fejer*monotoness_PAC.5} that
\[
         \label{eq.lem.Fejer*monotoness_PAC.6}
          \lV x^{k+1}-s\rV^2\le\lV x^k-s\rV^2-\lV x^{k+1} -x^k\rV^2,
 \]
for all $s \in C^k$ as defined in \cref{e13}. The Slater condition implies that $\inte(C)\ne\emptyset$. Take any $s\in\inte(C)$,
so that $f_i(s)< 0$ for all $i$. Since $\{\epsilon_k\}$ converges to $0$, there exists $N(s)\in\na$, such that $\epsilon_k < \min_{1\leq i \leq m} \lv f_i(s)\rv$ for all $k\geq N(s)$. Therefore, we have $s\in \CC ^k$ for all $k\geq N(s)$. Hence, in view of \cref{eq.lem.Fejer*monotoness_PAC.6}
\[
         \label{e11}
          \lV x^{k+1}-s\rV^2\le\lV x^k - s\rV^2,
     \]
     for all $s \in \inte(C)$ and all $k \geq N(s)$. In view of the definition of Fej\'er* monotonicity, $\{x^k\}$ is Fej\'er* monotone with respect to $\inte(C)$.
     %\item

      For \textbf{(ii)}, note that \cref{eq.lem.Fejer*monotoness_PAC.6} implies that $\left\{\lV x^k-s\rV\right\}$ is decreasing and non-negative. Hence, $\left\{\lV x^k-s\rV\right\}$ is convergent, and also
     \[\label{e12}
     \lV x^{k+1}-x^k\rV^2\le\lV x^k-s\rV^2-\lV x^{k+1}-s\rV^2,
     \]
     which immediately implies the result.
     %\item 

     For item {\bf (iii)}, notice that the Slater condition implies that $\inte(C)\ne\emptyset$. Then the result follows from item (i) and \Cref{lem.Fejer*monotone.char}(i).
     %\end{listi}
\end{proof}

Next we prove that the cluster points of the sequence generated by PACA solve the CFP \cref{eq.CFP_1}.

\begin{proposition}
\label{lem.clusterpoit_in_C}
Let $\{x^k\}$ be the sequence generated by PACA. If  $\{x^k\}$ is infinite and $\Bar{x}$ is a cluster point of $\{x^k\}$, then $\Bar{x} \in C$.
\end{proposition}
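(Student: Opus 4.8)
The plan is to prove that $f_i(\bar x)\le 0$ for every $i=1,\dots,m$, which is precisely the assertion $\bar x\in C$. Everything hinges on first showing that the residuals vanish, namely $\lim_{k\to\infty}\lV v_i^k\rV=0$ for each $i$, and then transferring this information to the functions $f_i$ via the explicit formula \cref{eq.def.v_i_k} together with the uniform boundedness of the subgradients.

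To establish $\lV v_i^k\rV\to 0$, I would start from \cref{lem.Fejer*monotoness_PAC}(ii), which already gives $\lV x^{k+1}-x^k\rV\to 0$. For every index $k$ with $w^k\neq 0$, combining \cref{eq.def.alpha_k,eq.def.PAC} yields $\lV x^{k+1}-x^k\rV^2=\alpha_k^2\lV w^k\rV^2=\alpha_k\cdot\frac1m\sum_{i=1}^m\lV v_i^k\rV^2$, and since $\alpha_k\ge 1$ by \cref{cor.boundedness_alphak}, this gives $\frac1m\sum_{i=1}^m\lV v_i^k\rV^2\le\lV x^{k+1}-x^k\rV^2$. For the remaining indices, where $w^k=0$, the point $x^k$ minimizes the convex function $x\mapsto\sum_{i=1}^m\lV x-P_{S_i^k}(x)\rV^2$; once $k$ is large enough that $\epsilon_k<\min_{1\le i\le m}\lv f_i(\hat x)\rv$ (so that the Slater point satisfies $\hat x\in S^k$ and hence $S^k\neq\emptyset$), the minimum value is $0$ and is attained exactly on $S^k$, which forces $x^k\in S^k$ and therefore $P_{S_i^k}(x^k)=x^k$, i.e.\ $v_i^k=0$ for all $i$. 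Thus $\frac1m\sum_{i=1}^m\lV v_i^k\rV^2\le\lV x^{k+1}-x^k\rV^2$ holds in both cases for all large $k$, and consequently $\lV v_i^k\rV\to 0$ for each $i$.

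Next I would translate this into an upper bound on the $f_i$. Since $\{x^k\}$ is bounded by \cref{lem.Fejer*monotoness_PAC}(iii) and each $f_i$ is convex, hence locally Lipschitz, the subgradients are uniformly bounded: there is $L>0$ with $\lV u_i^k\rV\le L$ for all $i,k$. From \cref{eq.def.v_i_k} one reads off $\lV u_i^k\rV\,\lV v_i^k\rV=\max\{0,f_i(x^k)+\epsilon_k\}$, so that
\[
f_i(x^k)+\epsilon_k\le\max\{0,f_i(x^k)+\epsilon_k\}=\lV u_i^k\rV\,\lV v_i^k\rV\le L\,\lV v_i^k\rV.
\]
Taking a subsequence $x^{k_j}\to\bar x$ and letting $j\to\infty$, the continuity of $f_i$ together with $\epsilon_{k_j}\to 0$ and $\lV v_i^{k_j}\rV\to 0$ then gives $f_i(\bar x)\le 0$ for every $i$, i.e.\ $\bar x\in C$.

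I expect the main obstacle to be the degenerate case $w^k=0$, where $x^{k+1}=x^k$ and the estimate coming from $\alpha_k\ge 1$ is unavailable: one must rule out that a nonzero residual $v_i^k$ persists while the iterate stays put. This is exactly what the characterization of $w^k=0$ as $x^k$ minimizing the sum of squared distances to the $S_i^k$ resolves, once the Slater condition guarantees $S^k\neq\emptyset$ for large $k$. A secondary, more routine point is the uniform subgradient bound, which follows from boundedness of the iterates and the local Lipschitz continuity of finite convex functions.
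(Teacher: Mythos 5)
Your proof is correct and follows essentially the same route as the paper's: both exploit $\alpha_k\ge 1$ to get $\tfrac1m\sum_{i=1}^m\lV v_i^k\rV^2\le\lV x^{k+1}-x^k\rV^2$, let \cref{lem.Fejer*monotoness_PAC}(ii) force the residuals to vanish, and then use boundedness of the subgradients to pass to the limit and conclude $f_i(\bar x)\le 0$. The only differences are minor — you invoke a uniform subgradient bound where the paper refines the subsequence so that $u_i^{\ell_k}\to\bar u_i$ — and your explicit treatment of the degenerate case $w^k=0$ (which the chain of equalities in \cref{eq.lem.clusterpoit_in_C.3} tacitly assumes away) is a small but genuine improvement.
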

\begin{proof}
First, note that the existence of cluster points of $\{x^k\}$ follows from \Cref{lem.Fejer*monotoness_PAC}.
Let $\hat x$ be a Slater point for the CFP. Since all the $f_i$'s are convex and 
$f_i(\hat x)< 0$ for all $i$, we get from the definition of subgradient that 
\[
    \label{eq.lem.clusterpoit_in_C.1}
    (u_{i}^{k})^\top (\hat{x}-x^k) \leq f_i(\hat{x}) - f_i(x^k) \leq - f_i(x^k),
\]
Let $\{x^{\ell_k}\}$ be a subsequence of $\{x^k\}$ which converges to the cluster point $\Bar x$. By \cref{eq.def.PAC},
\[\label{e15}
x^k-x^{k+1}=\alpha_kw^k.
\]
By \Cref{cor.boundedness_alphak}, we know that $\alpha_k \geq 1$, so
\[\label{eq.lem.clusterpoit_in_C.3}
\left\|x^k-x^{k+1}\right\|^2=\alpha_k^2\left\|w^k\right\|^2=\alpha_k\left(\frac{1}{m} \sum_{i=1}^m\left\|v^{k}_i\right\|^2\right) \geq \frac{1}{m} \sum_{i=1}^m\left(\frac{\max \left\{0, f_i\left(x^k\right)+\epsilon_k\right\}}{\left\|u_i^{ k}\right\|^2}\right)^2. 
\]

% Multiplying both sides of \cref{e15} by $\hat x-x^k$, we get 
% \begin{align}
% (x^k - x^{k+1})^\top(\hat{x} - x^k) &= \alpha_k (w^k)^\top (\hat{x} - x^{k}) \\ &=
% %\label{eq.lem.clusterpoit_in_C.2}
% \frac{\alpha_k}{m}\sum_{i=1}^m \frac{\max\{0,f_i(x^k)+\epsilon_k\}}{\|u_{i}^{k}\|^2}(u_{i}^{k})^\top
% (\hat{x}-x^{k})\\&\le-\frac{\alpha_k}{m} \sum_{i=1}^m \frac{\max\{0,f_i(x^{l_k})+\epsilon_k\}}{\|u_{i}^{k}\|^2} f_i(x^k),
% \end{align}
% where the inequality holds by \cref{eq.lem.clusterpoit_in_C.1}. Rearranging the last inequality, we get
% \begin{align}
%         (x^{k+1}-x^k)^\top(\hat x-x^k) & \ge
%         \frac{\alpha_k}{m}\sum_{i=1}^m \frac{\max\{0,f_i(x^{k})+\epsilon_k\}}{\|u_{i}^{k}\|^2} f_i(x^k) \\
%         & \ge
%         \frac{1}{m}\sum_{i=1}^m \frac{\max\{0,f_i(x^{k})+\epsilon_k\}}{\|u_{i}^{k}\|^2}f_i(x^k),     \label{eq.lem.clusterpoit_in_C.3}
% \end{align}
Since $\{x^k\}$ is bounded by \Cref{thm.Fejer*monotone_theorem}(i) and the subdifferential is locally bounded (see, \emph{e.g.}, \cite[Thm.~24.7]{Rockafellar:1997}), we refine the subsequence $\{x^{\ell_k}\},$ if needed, in order to  ensure that for each $i$, $\{u_i^{\ell_k}\}$ converges, say to $\Bar{u}_i$. Let $I\coloneqq\{i \in \{1,
\ldots, m\}\mid  f_i(\bar x)\ge 0\}$. We take limits with $k\to\infty$ on both sides of \cref{eq.lem.clusterpoit_in_C.3} along the  refined subsequence $\{x^{\ell_k}\}$, getting
\begin{equation}\label{e56}
    0\ge\frac{1}{m}\sum_{i\in I}\left(\frac{f_i(\bar x)}{\lV\bar u_i\rV^2}\right)^2,
\end{equation}
because $\lim_{k\to\infty}\epsilon_k=0$, and $\lim_{k\to\infty} (x^{k+1}-x^k)=0$ by \Cref{lem.Fejer*monotoness_PAC}(ii).
It follows from \cref{e56} that $f_i(\Bar x)=0$ for all $i\in I$. Since, by the definition of $I$, $f_i(\Bar x) < 0$ for all $i\notin I$, we get that $f_i(\Bar x)\le 0$ 
for $i=1,\ldots,m$. Therefore, we conclude that $\bar x\in C$.
\end{proof}

We close this section by establishing convergence of the full sequence generated by PACA to a solution of the CFP  \cref{eq.CFP_1}.

\begin{theorem}
\label{thm.infiniteConvergence}
 Let $\{x^k\}$ be the sequence generated by PACA starting from an arbitrary point $x^0 \in \re^n$. If $\{x^k\}$ is infinite, then it converges to some $x^\star\in C$.
\end{theorem}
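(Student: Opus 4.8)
The plan is to assemble this theorem directly from the three results already established in this section, since together they supply all of the analytic content; no new estimate is required. The statement is essentially the synthesis of \Cref{lem.Fejer*monotoness_PAC}, \Cref{lem.clusterpoit_in_C}, and \Cref{thm.Fejer*monotone_theorem}(i), and the work consists of checking that their hypotheses line up.

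First I would observe that, because $\{x^k\}$ is assumed to be infinite, \Cref{lem.Fejer*monotoness_PAC}(iii) guarantees that the sequence is bounded, and hence, by Bolzano--Weierstrass in $\re^n$, it possesses at least one cluster point $\Bar{x}$. Then \Cref{lem.clusterpoit_in_C} ensures that every cluster point of $\{x^k\}$ lies in $C$; in particular $\Bar{x}\in C$. It remains to upgrade the existence of a single cluster point in $C$ to convergence of the whole sequence. For this I would invoke the Slater condition (\Cref{def.Slater_condition}): since each $f_i$ is convex and therefore continuous, a Slater point $\hat x$ satisfying $f_i(\hat x)<0$ for all $i$ lies in $\inte(C)$, so $C$ is a closed convex set with $\inte(C)\ne\emptyset$. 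By \Cref{lem.Fejer*monotoness_PAC}(i), the sequence $\{x^k\}$ is Fej\'er* monotone with respect to $\inte(C)$, and thus \Cref{thm.Fejer*monotone_theorem}(i), applied with this $C$ and the cluster point $\Bar{x}\in C$, yields that the entire sequence converges to $\Bar{x}$. Setting $x^\star\coloneqq\Bar{x}\in C$ completes the argument.

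Since every ingredient is already in place, there is no genuine obstacle here; the only point demanding minor care is the matching of hypotheses in \Cref{thm.Fejer*monotone_theorem}(i), namely that the Fej\'er* monotonicity is taken precisely with respect to the \emph{open} set $\inte(C)$, while the cluster point and the limit live in the closed feasible set $C$. This interplay between the open interior and the closed feasible set is exactly what the Fej\'er* machinery, via the identity $C=\closure{\conv(\inte(C))}$ exploited in the proof of \Cref{thm.Fejer*monotone_theorem}, was designed to reconcile.
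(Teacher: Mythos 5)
Your proof is correct and follows exactly the same route as the paper: boundedness from \Cref{lem.Fejer*monotoness_PAC}(iii), cluster points in $C$ from \Cref{lem.clusterpoit_in_C}, and full convergence via \Cref{thm.Fejer*monotone_theorem}(i). Your extra remarks on the Slater condition guaranteeing $\inte(C)\neq\emptyset$ and on the open/closed interplay are accurate elaborations of details the paper leaves implicit.
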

\begin{proof}
By \Cref{lem.Fejer*monotoness_PAC}(iii), $\{x^k\}$ is bounded, so that it has cluster points. \Cref{lem.clusterpoit_in_C} yields that its cluster points belong to $C$. Finally, \Cref{thm.Fejer*monotone_theorem}(i) implies that  $\{x^k\}$ converges to some $x^\star\in C$.
\end{proof}

\section{Linear convergence rate}\label{s5}

In this section we will prove that if the sequence generated by PACA is infinite, then under the Slater condition it enjoys a linear convergence rate. It is worth mentioning that in other papers addressing circumcentered-reflection methods, such as  \cite{Behling:2024,Arefidamghani:2021,Behling:2018a,Araujo:2022}, a linear rate of the generated sequence was proved under an error bound (or transversality) assumption~\cite{Kruger:2018a,Behling:2021}. This assumption is somewhat weaker than our Slater condition (see Proposition 3.2, Theorem 3.3 and the last paragraph of Subsection 3.1 in \cite{Behling:2024}) but under the Slater condition we can achieve a better result, namely finite convergence. In the above-mentioned references the asymptotic constant related to the linear convergence depends on some constants linked to the
error bound assumption; as it could be expected, our asymptotic constant is given in terms of the Slater point. 

\begin{proposition}
\label{cor.cor1}
Let $\{x^k\}$ be the sequence generated by PACA. If $\{x^k\}$ is infinite, then there exist $N\in\na$ such that
\[
\label{eq.cor1}
\dist^2(x^{k+1},C^k) \le \dist^2(x^k,C^k)-\lV x^{k+1}-x^k\rV^2,
\]
for all $k\geq N$.
\end{proposition}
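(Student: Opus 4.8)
The plan is to leverage the quasi-nonexpansiveness result already established in Lemma~\ref{cor.nonexpansiveness_CRM}, transported back from the product space to $\re^n$ exactly as was done in the proof of Lemma~\ref{lem.Fejer*monotoness_PAC}(i). The key observation is that inequality~\eqref{eq.lem.Fejer*monotoness_PAC.6}, namely
\[
\lV x^{k+1}-s\rV^2\le\lV x^k-s\rV^2-\lV x^{k+1}-x^k\rV^2,
\]
holds for \emph{every} $s\in C^k=\bigcap_{i=1}^m C_i^k$, and this is precisely a per-point estimate from which a distance-to-set estimate should follow. So first I would fix an index $k$ large enough that $C^k\neq\emptyset$, and argue that such an $N$ exists: as noted in the discussion following Algorithm~\ref{algo:PACA}, once $\epsilon_k<\min_{1\le i\le m}\lv f_i(\hat x)\rv$ (which happens for all large $k$ since $\epsilon_k\to0$), the Slater point $\hat x$ lies in $C^k$, hence $C^k$ is nonempty from some index $N$ onward. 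For all $k\ge N$ the inequality~\eqref{eq.lem.Fejer*monotoness_PAC.6} is available for every $s\in C^k$.

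The crux is to pass from the pointwise bound to the distance bound. Here the natural choice is to specialize $s$ to the projection $s=P_{C^k}(x^{k+1})$, which is well-defined since $C^k$ is a nonempty closed convex set (closed and convex because each $C_i^k$ is a sublevel set of a convex function, hence closed and convex, and intersections preserve these properties). With this choice the left-hand side of~\eqref{eq.lem.Fejer*monotoness_PAC.6} becomes exactly $\dist^2(x^{k+1},C^k)$, because $\lV x^{k+1}-P_{C^k}(x^{k+1})\rV=\dist(x^{k+1},C^k)$. For the right-hand side, I would use that $s=P_{C^k}(x^{k+1})\in C^k$, so that $\lV x^k-s\rV\ge\dist(x^k,C^k)$, which yields
\[
\dist^2(x^{k+1},C^k)=\lV x^{k+1}-s\rV^2\le\lV x^k-s\rV^2-\lV x^{k+1}-x^k\rV^2.
\]
The remaining obstacle is that the term $\lV x^k-s\rV^2$ is bounded \emph{below}, not above, by $\dist^2(x^k,C^k)$, so replacing it directly would go the wrong way. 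The clean fix is to instead choose $s=P_{C^k}(x^k)$, the projection of $x^k$ rather than of $x^{k+1}$. Then $\lV x^k-s\rV^2=\dist^2(x^k,C^k)$ exactly, while $\lV x^{k+1}-s\rV^2\ge\dist^2(x^{k+1},C^k)$ since $s\in C^k$, giving
\[
\dist^2(x^{k+1},C^k)\le\lV x^{k+1}-s\rV^2\le\lV x^k-s\rV^2-\lV x^{k+1}-x^k\rV^2=\dist^2(x^k,C^k)-\lV x^{k+1}-x^k\rV^2,
\]
which is precisely~\eqref{eq.cor1}.

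The main subtlety I anticipate is choosing the correct point $s$ at which to evaluate~\eqref{eq.lem.Fejer*monotoness_PAC.6}: picking $P_{C^k}(x^{k+1})$ turns the left side into a distance but leaves the right side as an overestimate, whereas picking $P_{C^k}(x^k)$ turns the right side into a distance and leaves the left side as an overestimate in the favorable direction. Since both inequalities point the same way for the desired conclusion, the choice $s=P_{C^k}(x^k)$ is the one that closes the argument, and I would present that one. The only nonroutine verification is the existence of $N$ ensuring $C^k\neq\emptyset$, which I would justify via the Slater point as above; everything else is a direct substitution into the already-proven Fej\'er-type inequality~\eqref{eq.lem.Fejer*monotoness_PAC.6}.
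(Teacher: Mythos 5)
Your proof is correct and follows essentially the same route as the paper: the paper simply takes the minimum over $s\in C^k$ on both sides of \cref{eq.lem.Fejer*monotoness_PAC.6}, which is exactly your (more explicit) choice $s=P_{C^k}(x^k)$. Your additional justification that $C^k\neq\emptyset$ for $k\ge N$ via the Slater point is a detail the paper leaves implicit, and it is handled correctly.
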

\begin{proof}
We have proved in \cref{eq.lem.Fejer*monotoness_PAC.6} that
\[\label{e16}
\lV x^{k+1}-s\rV^2\le\lV x^k-s\rV^2-\lV x^{k+1} -x^k\rV^2,
 \]
for all $k\geq N$ and all $s\in C^k$. The result follows by taking minimum with $s\in C^k$ in both sides of \cref{e16}.  
\end{proof}

\begin{proposition}
\label{lem.dist_functionValue}
Let $\{x^k\}$ be the sequence generated by PACA. If $\{x^k\}$ is infinite, then there exist $N\in\na$ and $\beta\in\re$ %and  an index $\ell(k) \in \{1,\ldots,m\}$ 
such that
    \[
    \label{eq.lem.dist_functionValue}
        \dist(x^k,C^k) \le\beta(f(x^k)+\epsilon_k),
    \]
for all $k\ge N$, where  $f(x)\coloneqq \max_{1\le i \le m} f_i(x)$.
\end{proposition}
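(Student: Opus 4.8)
The plan is to establish \cref{eq.lem.dist_functionValue} as a Slater-type (Hoffman-like) global error bound for the single convex inequality that defines $C^k$. Set $g_k(x)\coloneqq f(x)+\epsilon_k=\max_{1\le i\le m}f_i(x)+\epsilon_k$, which is convex as a maximum of convex functions shifted by a constant. By \cref{a1} one has $C^k=\bigcap_{i=1}^m C_i^k=\{x\in\re^n\mid g_k(x)\le 0\}$. Since $\{x^k\}$ is infinite, the stopping criterion of \Cref{algo:PACA} is never triggered, so $x^k\notin C$ for every $k$; as $\epsilon_k>0$ gives $C_i^k\subset C_i$ and hence $C^k\subset C$, it follows that $x^k\notin C^k$ and therefore $g_k(x^k)=f(x^k)+\epsilon_k>0$ for all $k$. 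This strict positivity is precisely what makes the right-hand side of \cref{eq.lem.dist_functionValue} meaningful and is the reason the hypothesis that the sequence be infinite is used.

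Next I would bring in a Slater point $\hat x$ from \Cref{def.Slater_condition} and set $\delta\coloneqq -f(\hat x)=-\max_{1\le i\le m}f_i(\hat x)>0$. Because $\{\epsilon_k\}$ decreases to $0$, there is $N\in\na$ with $\epsilon_k\le\delta/2$ for all $k\ge N$; for such $k$ we get $g_k(\hat x)=f(\hat x)+\epsilon_k=-\delta+\epsilon_k\le-\delta/2<0$. Thus, for $k\ge N$, the point $\hat x$ is strictly feasible for $C^k$ while $x^k$ violates the defining inequality.

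The core of the argument is a one-dimensional convexity (segment) step. For $k\ge N$, consider $z(t)\coloneqq(1-t)x^k+t\hat x$ and the choice $t_k\coloneqq g_k(x^k)/\bigl(g_k(x^k)-g_k(\hat x)\bigr)\in(0,1)$. Convexity of $g_k$ gives $g_k(z(t_k))\le(1-t_k)g_k(x^k)+t_k g_k(\hat x)=0$, so $z(t_k)\in C^k$, and therefore
\[
\dist(x^k,C^k)\le\lV x^k-z(t_k)\rV=t_k\lV x^k-\hat x\rV
=\frac{g_k(x^k)}{g_k(x^k)-g_k(\hat x)}\,\lV x^k-\hat x\rV
\le\frac{\lV x^k-\hat x\rV}{-g_k(\hat x)}\,\bigl(f(x^k)+\epsilon_k\bigr),
\]
where the last inequality uses $g_k(x^k)>0$, so that $g_k(x^k)-g_k(\hat x)\ge-g_k(\hat x)>0$. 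Finally I would make the prefactor uniform in $k$: by \Cref{lem.Fejer*monotoness_PAC}(iii) the sequence $\{x^k\}$ is bounded, so $M\coloneqq\sup_k\lV x^k-\hat x\rV<\infty$, while for $k\ge N$ we have $-g_k(\hat x)\ge\delta/2$. Combining these with the displayed bound yields $\dist(x^k,C^k)\le(2M/\delta)\bigl(f(x^k)+\epsilon_k\bigr)$ for all $k\ge N$, so that $\beta\coloneqq 2M/\delta$ and this $N$ satisfy the claim.

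The only genuinely delicate point is the uniformity of $\beta$: the naive error-bound constant $\lV x^k-\hat x\rV/(-g_k(\hat x))$ depends on $k$ through both numerator and denominator, and a priori the denominator could degrade to $0$ as $\epsilon_k\to0$. Both difficulties are resolved by the two ingredients above, namely boundedness of $\{x^k\}$ (controlling the numerator) and the strict Slater gap $\delta>0$ together with $\epsilon_k\to0$ (keeping the denominator bounded away from $0$ for $k\ge N$). Everything else is a routine convexity computation along the segment joining $x^k$ to $\hat x$.
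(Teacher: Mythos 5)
Your proof is correct, and it reaches the conclusion by a genuinely different route than the paper's. The paper's own argument invokes an external monotonicity lemma (Lemma~2.1 of Kolobov et al., cited as \cite{Kolobov:2021}) asserting that the ratio $\dist(x^k,C^k)/(f(x^k)+\epsilon_k)$ does not decrease when $\epsilon_k$ is replaced by the larger $\epsilon_N$; it then controls $\dist(x^k,C^N)$ by $\dist(x^N,C^N)$ via the Fej\'er-type inequality \cref{eq.lem.Fejer*monotoness_PAC.6} and bounds the denominator below by $\epsilon_N$, producing $\beta=\dist(x^N,C^N)/\epsilon_N$. You instead give a self-contained Slater-type error bound: the segment from $x^k$ to the strictly feasible point $\hat x$ meets $C^k$ at the parameter $t_k=g_k(x^k)/(g_k(x^k)-g_k(\hat x))$, which yields $\dist(x^k,C^k)\le \lV x^k-\hat x\rV\,(f(x^k)+\epsilon_k)/(-g_k(\hat x))$, and uniformity of the constant follows from boundedness of $\{x^k\}$ (\Cref{lem.Fejer*monotoness_PAC}(iii)) together with $-g_k(\hat x)\ge\delta/2$ for $k\ge N$. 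Your route avoids the citation entirely and produces a constant $\beta=2M/\delta$ expressed directly in terms of the Slater point, which is arguably more transparent (and matches the paper's remark in \cref{s5} that its constants are ``given in terms of the Slater point''), whereas the paper's route is shorter on the page because it reuses the already-established Fej\'er inequality and delegates the convexity work to the cited lemma. Note that both arguments use the Slater condition, which the proposition statement does not list explicitly but which the paper assumes throughout; your correct observation that an infinite sequence forces $f(x^k)+\epsilon_k>0$ mirrors the corresponding step in the paper's proof.
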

\begin{proof}

It follows from the definition of $f$  that $C^k = \left\{x\in \re^n \mid  f(x)+\epsilon_k \leq 0\right\}$. Since $\left\{\epsilon_k\right\}$ is decreasing, there exists $N\in \na$ such that $C^k \neq \emptyset$, for all $k \geq N$. Moreover, knowing that $\left\{x^k\right\}$ is infinite, we must have $f\left(x^k\right)>0$. Consequently, thanks to \cite[Lem.~2.1]{Kolobov:2021}, we get
\[
\frac{\dist\left(x^k, C^k\right)}{f\left(x^k\right)+\epsilon_k} \leq \frac{\dist\left(x^k, C^N\right)}{f\left(x^k\right)+\epsilon_N}.
\]
The inclusion $C^N \subset C^k$, when combined with \cref{eq.lem.Fejer*monotoness_PAC.6}, leads to $\left\|x^k-s\right\| \leq\left\|x^N-s\right\|$ for all $k \geq N$ and for all $s \in C^N$. In particular, we get $\dist\left(x^k, C^N\right) \leq \dist\left(x^N, C^N\right)$. 

On the other hand, $f\left(x^k\right)+\epsilon_N > \epsilon_N$. Consequently, we arrive at
\[
\dist\left(x^k, C^k\right) \leq \underbrace{\frac{\dist\left(x^N, C^N\right)}{\epsilon_N}}_\beta\left(f\left(x^k\right)+\epsilon_k\right) .
\]
\end{proof}

\begin{proposition}
\label{cor.cor2}
Let $\{x^k\}$ be the sequence generated by PACA, starting from $x^0\in\re^n$. If  $\{x^k\}$ is infinite, then there exists $N\in\na$ and  $\sigma>0$ such that
\[
    \label{eq.cor.cor2}
        \dist^2(x^k,C^k)\le m(\beta\sigma)^2\lV x^{k+1}-x^k\rV^2,
    \]  
for all $k\ge N$.
\end{proposition}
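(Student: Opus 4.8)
The plan is to derive \cref{eq.cor.cor2} by sandwiching $\dist^2(x^k,C^k)$ between two already-available estimates: the upper bound $\dist(x^k,C^k)\le\beta(f(x^k)+\epsilon_k)$ from \Cref{lem.dist_functionValue}, and a lower bound for the step length $\lV x^{k+1}-x^k\rV$ in terms of $f(x^k)+\epsilon_k$ that is essentially recorded in \cref{eq.lem.clusterpoit_in_C.3}. Thus the argument is a careful recombination of earlier results, the only new ingredient being a uniform control on the subgradients.

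First I would fix the constant $\sigma$. By \Cref{lem.Fejer*monotoness_PAC}(iii) the sequence $\{x^k\}$ is bounded, hence contained in a compact set, and since each $f_i$ is convex its subdifferential is locally bounded (see \cite[Thm.~24.7]{Rockafellar:1997}). As there are only finitely many functions $f_i$, there is a single $\sigma>0$ with $\lV u_i^{k}\rV\le\sigma$ for all $i=1,\ldots,m$ and all $k$. Next I would invoke \cref{eq.lem.clusterpoit_in_C.3}, namely
\[
\lV x^k-x^{k+1}\rV^2\ge\frac{1}{m}\sum_{i=1}^m\left(\frac{\max\{0,f_i(x^k)+\epsilon_k\}}{\lV u_i^{k}\rV^2}\right)^2.
\]
Since $\{x^k\}$ is infinite, the stopping criterion is never met, so $f(x^k)>0$ for every $k$; letting $i^\star$ be an index with $f_{i^\star}(x^k)=f(x^k)=\max_{1\le i\le m}f_i(x^k)$, we have $\max\{0,f_{i^\star}(x^k)+\epsilon_k\}=f(x^k)+\epsilon_k$. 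Discarding all but the $i^\star$-term and using $\lV u_{i^\star}^{k}\rV\le\sigma$ yields
\[
\lV x^k-x^{k+1}\rV^2\ge\frac{1}{m}\,\frac{(f(x^k)+\epsilon_k)^2}{\lV u_{i^\star}^{k}\rV^2}\ge\frac{(f(x^k)+\epsilon_k)^2}{m\sigma^2},
\]
that is, $(f(x^k)+\epsilon_k)^2\le m\sigma^2\lV x^{k+1}-x^k\rV^2$.

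Finally I would take $N$ at least as large as the index supplied by \Cref{lem.dist_functionValue}, square the inequality \cref{eq.lem.dist_functionValue}, and substitute the displayed bound to obtain, for all $k\ge N$,
\[
\dist^2(x^k,C^k)\le\beta^2(f(x^k)+\epsilon_k)^2\le m(\beta\sigma)^2\lV x^{k+1}-x^k\rV^2,
\]
which is exactly \cref{eq.cor.cor2}. The point requiring care is the legitimacy of the bound \cref{eq.lem.clusterpoit_in_C.3}, which was derived assuming $w^k\neq0$; this causes no trouble for large $k$, since once $\epsilon_k$ is small enough that the Slater point lies in $C^k$ we have $C^k\subset S^k$ nonempty while $x^k\notin S^k$ (because $f(x^k)>0$), which forces $w^k\neq0$. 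One should also note that $\lV u_{i^\star}^{k}\rV\neq0$, so that the retained term is well defined: indeed $u_{i^\star}^{k}=0$ would make $x^k$ a global minimizer of $f_{i^\star}$ with $f_{i^\star}(x^k)>0$, rendering $C_{i^\star}$ empty and contradicting the Slater condition.
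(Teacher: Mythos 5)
Your proof is correct and yields exactly the stated constant, but it reaches the key intermediate inequality by a different (more elementary) route than the paper. The paper lower-bounds the step length by working in the product space: using the equidistance property of the circumcenter it shows $2\lV\vx^k-P_{\bSS^k}(\vx^k)\rV=\lV\vx^k-R_{\bSS^k}(\vx^k)\rV\le\lV\vx^k-\vx^{k+1}\rV+\lV\vx^{k+1}-R_{\bSS^k}(\vx^k)\rV=2\lV\vx^{k+1}-\vx^k\rV$, and then translates this back to $\re^n$ via \Cref{lem.PAC_and_CRM} to get $m\lV x^{k+1}-x^k\rV^2\ge\sum_{i=1}^m\lV x^k-P_{S_i^k}(x^k)\rV^2$ as in \cref{e27}. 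You instead use the purely algebraic identity $\lV x^{k+1}-x^k\rV^2=\alpha_k^2\lV w^k\rV^2=\alpha_k\cdot\frac{1}{m}\sum_i\lV v_i^k\rV^2$ combined with $\alpha_k\ge1$ from \Cref{cor.boundedness_alphak}, i.e., you recycle \cref{eq.lem.clusterpoit_in_C.3}; this gives the same bound without leaving $\re^n$ and without invoking the circumcenter geometry. From there on (retaining only the maximizing index, the uniform subgradient bound $\sigma$, and \Cref{lem.dist_functionValue}) the two arguments coincide. Two small points. First, you correctly flag that the bound presupposes $w^k\neq0$ for large $k$ — something the paper leaves implicit — and your resolution is right, but the claim that $S^k\neq\emptyset$ together with $x^k\notin S^k$ \emph{forces} $w^k\neq0$ deserves its one-line justification: for $s\in S^k$ the projection characterization gives $(v_i^k)^\top(x^k-s)\ge\lV v_i^k\rV^2$ for each $i$, so $w^k=0$ would force every $v_i^k=0$, i.e., $x^k\in S^k$. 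Second, \cref{eq.lem.clusterpoit_in_C.3} as printed carries $\lV u_i^k\rV^2$ where $\lV u_i^k\rV$ should appear inside the squared fraction (compare \cref{e27}); your subsequent display uses the correct value of $\lV v_{i^\star}^k\rV^2$, so nothing is affected, but you should quote the corrected form. (The paper additionally enlarges $\sigma$ so that $m(\beta\sigma)^2>1$; this is not needed for the present statement but is used later in \Cref{lem.linearConvergence_distanceSequence.fake}.)
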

\begin{proof}
Since $\{x^k\}$ is bounded, and the subdifferential of a convex function defined on the whole $\re^n$ is locally bounded (again, by \cite[Thm.~24.7]{Rockafellar:1997}), we can take $\sigma\in\re$ such that
\[\label{e26}
\lV u_{i}^{k}\rV\le\sigma,
\]
for all $k$ and all $i$. If needed, we also take $\sigma$ large enough so that $m(\beta\sigma)^2> 1$.

Let $\{\vx^k\}\subset\re^{nm}$ be the sequence generated by CRM as in \cref{eq.def.CRM_iteration} starting at $\vx^0 = (x^0,\ldots,x^0)$. Then,
\begin{align}
2\lV\vx^k - P_{\bSS^k}(\vx^k) \rV & =
\lV\vx^k-R_{\bSS^k}(\vx^k)\rV  \le\lV\vx^k-\vx^{k+1}\rV+\lV \vx^{k+1}-R_{\bf S^k}(\vx^k)\rV \\
& =2\lV\vx^{k+1}-\vx^k\rV,\label{e24} 
\end{align}
using the definition of reflection in the first equality, and the definition of circumcenter, together with \cref{eq.definition_Circumcenter} and \cref{eq:def.CRM_operator}, in the second one. It follows from \cref{e24} that
\[\label{e50}
\lV\vx^{k+1}-\vx^k\rV^2\ge
\lV\vx^k - P_{\bSS^k}(\vx^k)\rV^2.
\]

Now, define
\[
    \label{eq.def.I_k}
    I_k\coloneqq\{i \in\{1,\ldots,  m\} \mid f_i(x^k)+\epsilon_k >0 \} .
\]
In view of \Cref{lem.PAC_and_CRM}, we pass 
from $\re^{nm}$ 
to $\re^n$ in \cref{e50}, obtaining
\begin{align}
 m\lV x^{k+1}-x^k\rV^2 & \ge\sum_{i=1}^m\lV x^k - P_{S_i^k}(x^k)\rV^2=\sum_{i=1}^m\lV\frac{\max\{0,f_i(x^k)+\epsilon_k\}}{\|u_{i}^{k}\|^2}u_{i}^{k}\rV^2 \\ 
 &=\sum_{i=1}^m\left[\frac{\max\{0,f_i(x^k)+\epsilon_k\}}{\lV u_{i}^{k}\rV}\right]^2=\sum_{i\in I_k}\left[\frac{f_i(x^k)+\epsilon_k}{\lV u_{i}^{k}\rV}\right]^2\ge\left[\frac{f(x^k)+\epsilon_k}{\sigma}\right]^2,\label{e27}
 \end{align}
 where $ f(x)\coloneqq \max_{1\le i \le m} f_i(x)$.
% with $I_k$ and $\ell(k)$ as defined in \cref{eq.def.I_k} and
% \cref{eq.def.l_k} respectively. 
The first equality in \cref{e27} holds by \cref{eq.def.Approximate_projection_onto_C_epsilon},
the third equality by \cref{eq.def.I_k}, and the last inequality follows from the definition of $f$ and \cref{e26}. 

% TODO(RAFA) Verificar 

Take $N$ as in \Cref{lem.dist_functionValue}.
Combining \cref{e27} with \Cref{lem.dist_functionValue}, we obtain
\[
\dist^2(x^k,C^k)\le \beta^2(f(x^k)+\epsilon_k)^2\le m(\beta\sigma)^2\lV x^{k+1}-x^k\rV^2,
\]
for all $k\geq N$, completing the proof.
\end{proof}

\begin{proposition}
\label{lem.linearConvergence_distanceSequence.fake}
Let $\{x^k\}$ be the sequence generated by PACA. If $\{x^k\}$ is infinite, then the sequence $\{\dist(x^k,C^k)\}$ converges Q-linearly to $0$.
\end{proposition}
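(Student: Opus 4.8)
The plan is to combine the two inequalities already established in the preceding propositions to extract a Q-linear decay of the scalar sequence $\{\dist(x^k,C^k)\}$. The key observation is that \Cref{cor.cor1} controls how $\dist^2(x^{k+1},C^k)$ relates to $\dist^2(x^k,C^k)$ through the step length $\lV x^{k+1}-x^k\rV^2$, while \Cref{cor.cor2} bounds $\dist^2(x^k,C^k)$ from above by a constant multiple of that same step length. Substituting the latter into the former should yield a contraction factor strictly less than $1$.

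**The main computation.**

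Concretely, I would first fix $N\in\na$ large enough that both \Cref{cor.cor1} and \Cref{cor.cor2} hold for all $k\ge N$. From \Cref{cor.cor2} we have $\lV x^{k+1}-x^k\rV^2\ge \frac{1}{m(\beta\sigma)^2}\dist^2(x^k,C^k)$, and since $\sigma$ was chosen so that $m(\beta\sigma)^2>1$, the constant $\mu\coloneqq \frac{1}{m(\beta\sigma)^2}$ lies in $(0,1)$. Feeding this lower bound on the step length into \cref{eq.cor1} gives
\[
\dist^2(x^{k+1},C^k)\le \dist^2(x^k,C^k)-\mu\,\dist^2(x^k,C^k)=(1-\mu)\dist^2(x^k,C^k),
\]
for all $k\ge N$. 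The next step is to replace $\dist^2(x^{k+1},C^k)$ on the left by $\dist^2(x^{k+1},C^{k+1})$. This is legitimate because the perturbation parameters decrease, so $\epsilon_{k+1}\le\epsilon_k$ forces $C^{k+1}\subset C^k$, whence $\dist(x^{k+1},C^{k+1})\ge \dist(x^{k+1},C^k)$. Combining these, one obtains
\[
\dist^2(x^{k+1},C^{k+1})\ge \dist^2(x^{k+1},C^k),
\]
which unfortunately points the wrong way for a direct bound on $\dist(x^{k+1},C^{k+1})$.

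**The main obstacle and how to handle it.**

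The delicate point — and the one I expect to be the crux — is precisely this mismatch of reference sets: \Cref{cor.cor1} naturally produces $\dist^2(x^{k+1},C^k)$ on the left, but the quantity whose Q-linear convergence we want is $\dist^2(x^{k+1},C^{k+1})$, and the inclusion $C^{k+1}\subset C^k$ makes the distance to the smaller set \emph{larger}, not smaller. To circumvent this, I would instead apply \Cref{cor.cor1} with the index shifted, or re-derive the one-step inequality directly in terms of the varying sets. The cleanest route is to note that since $\dist(x^{k+1},C^{k+1})\to 0$ along with $\lV x^{k+1}-x^k\rV\to 0$ (by \Cref{lem.Fejer*monotoness_PAC}(ii)), and since the decay factor $(1-\mu)$ is uniform in $k$, one can set up the ratio $\dist(x^{k+1},C^{k+1})/\dist(x^k,C^k)$ and bound its $\limsup$. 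Using $\dist(x^{k+1},C^k)\le\sqrt{1-\mu}\,\dist(x^k,C^k)$ together with a careful estimate relating $\dist(x^{k+1},C^{k+1})$ to $\dist(x^{k+1},C^k)$ — controlled by the gap $\epsilon_k-\epsilon_{k+1}\to 0$ via the error-bound-type inequality of \Cref{lem.dist_functionValue} — should let the perturbation correction be absorbed asymptotically, so that $\limsup_k \dist(x^{k+1},C^{k+1})/\dist(x^k,C^k)\le\sqrt{1-\mu}<1$. This establishes Q-linear convergence to $0$, completing the proof.
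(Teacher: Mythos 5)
Your overall strategy --- feeding the lower bound on $\lV x^{k+1}-x^k\rV^2$ from \cref{cor.cor2} into the inequality of \cref{cor.cor1} to obtain the contraction factor $\sqrt{1-\tfrac{1}{m(\beta\sigma)^2}}$ --- is exactly the paper's argument, and that part of your computation is correct. The genuine gap is in your handling of the ``mismatch of reference sets,'' which rests on an inclusion stated backwards. Since $C^k=\{x\mid f(x)+\epsilon_k\le 0\}$ and $\epsilon_{k+1}\le\epsilon_k$, the constraint defining $C^{k+1}$ is \emph{weaker} than the one defining $C^k$, so $C^k\subset C^{k+1}$ (the perturbed sets grow toward $C$ from the inside as the perturbation shrinks, as the informal discussion in \cref{s6} also notes). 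Consequently $\dist(x^{k+1},C^{k+1})\le\dist(x^{k+1},C^k)$, the substitution on the left-hand side goes in the favorable direction, and the obstacle you describe does not exist: one passes directly from $\dist^2(x^{k+1},C^k)\le(1-\mu)\dist^2(x^k,C^k)$ to $\dist^2(x^{k+1},C^{k+1})\le(1-\mu)\dist^2(x^k,C^k)$, which is precisely what the paper does.

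Because of this reversed inclusion, the workaround you sketch in the last paragraph is both unnecessary and, as written, not a proof: you never actually produce the ``careful estimate relating $\dist(x^{k+1},C^{k+1})$ to $\dist(x^{k+1},C^k)$ controlled by $\epsilon_k-\epsilon_{k+1}$,'' and it is doubtful such an estimate could be absorbed asymptotically in general --- in the regime relevant to \cref{thm.finite_convergence} the gaps $\epsilon_k-\epsilon_{k+1}$ are non-summable while $\dist(x^k,C^k)$ would decay geometrically, so a correction of that order would eventually dominate the quantity being bounded. Correct the direction of the inclusion and your first computation already completes the proof.
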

\begin{proof}
    By \Cref{cor.cor1}, there exists $N\in\na$ such that
    \[\label{e28}
        \dist^2(x^{k+1},C^k)\le\dist^2(x^k,C^k) - \lV x^{k+1}-x^k\rV^2,
    \]
    for all $k\ge N$. Since $\{\epsilon_k\}$ is monotonically decreasing, we have $C^k\subset C^{k+1}$, so that we get from \cref{e28}, 
    \[\label{e29}
    \dist^2(x^{k+1},C^{k+1})\le\dist^2(x^k,C^k)-\lV x^{k+1}-x^k\rV^2.
    \]
    Using \Cref{cor.cor2} and \cref{e29}, we get
    \[\label{e33}
        \dist(x^{k+1},C^{k+1})\le\sqrt{1-\frac{1}{m(\beta\sigma)^2}}\,\dist(x^k,C^k),
    \]
    completing the proof.
\end{proof}
Next we prove that the sequence $\{\dist(x^k,C)\}$ converges R-linearly to $0$.
\begin{proposition}
\label{cor.linearConvergence_distanceSequence}
Let $\{x^k\}$ be the sequence generated by PACA. If  $\{x^k\}$ is infinite, then the sequence $\{dist(x^k,C)\}$ converges R-linearly to $0$.
\end{proposition}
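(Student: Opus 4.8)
The plan is to relate $\dist(x^k,C)$ directly to $\dist(x^k,C^k)$ through an elementary set inclusion, and then to transfer the Q-linear decay already established for $\{\dist(x^k,C^k)\}$ in \Cref{lem.linearConvergence_distanceSequence.fake}. The key observation is that the perturbation only shrinks the feasible region. Recalling that $f(x)=\max_{1\le i\le m}f_i(x)$, so that $C=\{x\in\re^n\mid f(x)\le 0\}$ and $C^k=\{x\in\re^n\mid f(x)+\epsilon_k\le 0\}$, I would note that since $\{\epsilon_k\}$ is a sequence of \emph{positive} parameters, any $x\in C^k$ satisfies $f(x)\le-\epsilon_k<0$, and hence $f(x)\le 0$. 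This gives the inclusion $C^k\subseteq C$ for every $k$. Taking the infimum of $\lV x^k-\cdot\rV$ over the smaller set $C^k$ can only be larger than the infimum over $C$, so that
\[
    \dist(x^k,C)\le\dist(x^k,C^k)
\]
for all $k$.

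Next I would invoke \Cref{lem.linearConvergence_distanceSequence.fake}, which guarantees that $\{\dist(x^k,C^k)\}$ converges Q-linearly to $0$ with asymptotic constant $\eta=\sqrt{1-\tfrac{1}{m(\beta\sigma)^2}}<1$ (cf.~\cref{e33}). By the standard implication recalled just before \Cref{thm.Fejer*monotone_theorem}, Q-linear convergence forces R-linear convergence with the same asymptotic constant, i.e.
\[
    \limsup_{k\to\infty}\dist(x^k,C^k)^{1/k}=\eta<1.
\]

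Finally, I would combine the two facts. Taking $k$-th roots in the inequality $\dist(x^k,C)\le\dist(x^k,C^k)$ and passing to the limit superior, the monotonicity of $\limsup$ yields
\[
    \limsup_{k\to\infty}\dist(x^k,C)^{1/k}\le\limsup_{k\to\infty}\dist(x^k,C^k)^{1/k}=\eta<1,
\]
which is precisely the assertion that $\{\dist(x^k,C)\}$ converges R-linearly to $0$. Since the whole argument reduces to one inclusion and the monotonicity of $\limsup$, there is no genuine obstacle here; the only points to verify are that $\epsilon_k>0$ (so that $C^k\subseteq C$ really holds) and that $\dist(x^k,C^k)$ is finite for large $k$, i.e.\ $C^k\neq\emptyset$ eventually---both already supplied by the hypotheses and by \Cref{lem.dist_functionValue}. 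If $C^k=\emptyset$ for some small $k$, the inequality $\dist(x^k,C)\le\dist(x^k,C^k)=+\infty$ is vacuous and irrelevant, since the R-linear estimate is an asymptotic statement governed by the large-$k$ regime where $C^k\neq\emptyset$.
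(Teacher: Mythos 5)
Your proposal is correct and follows essentially the same route as the paper: both rest on the inclusion $C^k\subset C$ (so that $\dist(x^k,C)\le\dist(x^k,C^k)$) and then transfer the linear decay of $\{\dist(x^k,C^k)\}$ from \Cref{lem.linearConvergence_distanceSequence.fake}. The only cosmetic difference is that the paper unrolls the recursion into an explicit geometric bound $\dist(x^k,C^k)\le\lambda^{k+1-N}\dist(x^{N-1},C^{N-1})$ before taking $k$-th roots, whereas you invoke the standard Q-to-R implication directly; both give the same asymptotic constant.
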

\begin{proof} Let 
\[\label{e43}
\lambda\coloneqq\sqrt{1-\frac{1}{m(\beta\sigma)^2}},
\]
with $\beta,\sigma$ as in
\Cref{lem.linearConvergence_distanceSequence.fake}. By \Cref{lem.linearConvergence_distanceSequence.fake}, there exists $N\in\na$ such that, for $k\geq N$,
    \[
        \label{e30}
        \dist(x^k,C^k)\le\lambda^{k+1-N} \dist(x^{N-1},C^{N-1}).
    \]
    Since $C^k \subset C$ for all $k$, we get from \cref{e30},
    \[
        \label{e31}
        \dist(x^k,C)\le \dist(x^k,C^k)\le \lambda^{k+1-N} \dist(x^{N-1},C^{N-1}),
    \]
    for all $k\ge N$. Taking $k$-th roots in \cref{e31}
    and then limits with $k\to\infty$, we get
    \[\label{e32}
    \lim_{k\to\infty}[\dist(x^k,C)]^{1/k}=
    \lim_{k\to\infty}\lambda^{1-(N+1)/k}[\dist(x^N,C^N)]^{1/k}=\lambda.
    \]
  Since $\lambda<1$ by \cref{e43}, we conclude that $\{\dist(x^k,C)\}$
  converges R-linearly to $0$.
 \end{proof}

\begin{theorem}
\label{thm.linearconvergence_sequence}
Let $\{x^k\}$ be the sequence generated by PACA starting from an arbitrary point $x^0 \in \re^n$. If $\{x^k\}$ is infinite, then $\{x^k\}$ converges R-linearly to some point $x^\star\in C$.
\end{theorem}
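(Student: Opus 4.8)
The plan is to assemble the pieces already established in \Cref{s4,s5} rather than to do any new estimation, since the substantive work has been carried out in the preceding propositions. The statement is precisely the conjunction of three facts: the full sequence converges to a point in $C$, it is Fej\'er* monotone with respect to $\inte(C)$, and the distances $\{\dist(x^k,C)\}$ go to zero R-linearly. The role of \Cref{thm.Fejer*monotone_theorem}(ii) is exactly to upgrade the R-linear decay of the distances to the set into R-linear convergence of the sequence itself to its limit, provided Fej\'er* monotonicity holds with respect to the interior of a closed convex set with nonempty interior.

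First I would record that, because $\{x^k\}$ is assumed infinite, \Cref{thm.infiniteConvergence} guarantees that the whole sequence $\{x^k\}$ converges to some $x^\star\in C$. Next I would note that the Slater condition (\Cref{def.Slater_condition}) ensures $\inte(C)\neq\emptyset$, so $C$ is a closed convex set with nonempty interior and we are squarely in the hypotheses of \Cref{thm.Fejer*monotone_theorem}. By \Cref{lem.Fejer*monotoness_PAC}(i), $\{x^k\}$ is Fej\'er* monotone with respect to $\inte(C)$, which is the remaining structural hypothesis needed.

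It then remains to supply the R-linear decay of the distance sequence: \Cref{cor.linearConvergence_distanceSequence} states precisely that $\{\dist(x^k,C)\}$ converges R-linearly to $0$, with asymptotic constant $\lambda=\sqrt{1-\tfrac{1}{m(\beta\sigma)^2}}<1$ inherited from \cref{e43}. Applying \Cref{thm.Fejer*monotone_theorem}(ii) with $y^\star=x^\star$ and $\eta=\lambda$, I would conclude that $\{x^k\}$ converges R-linearly to $x^\star$ with the same asymptotic constant, which is the desired statement.

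I do not expect a genuine obstacle here, as every hypothesis of \Cref{thm.Fejer*monotone_theorem}(ii) has already been verified elsewhere; the only point requiring a line of care is confirming that the limit point produced by \Cref{thm.infiniteConvergence} indeed lies in $C$ (so that the theorem applies) and that the asymptotic constant quoted from \Cref{cor.linearConvergence_distanceSequence} is transported unchanged, which \Cref{thm.Fejer*monotone_theorem}(ii) explicitly allows. Thus the proof is a short citation of \Cref{thm.infiniteConvergence}, \Cref{lem.Fejer*monotoness_PAC}(i), \Cref{cor.linearConvergence_distanceSequence}, and \Cref{thm.Fejer*monotone_theorem}(ii).
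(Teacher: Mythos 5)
Your proposal is correct and follows essentially the same route as the paper, which simply cites \Cref{cor.linearConvergence_distanceSequence} and \Cref{thm.Fejer*monotone_theorem}(ii); you merely make explicit the verification of the remaining hypotheses (Fej\'er* monotonicity from \Cref{lem.Fejer*monotoness_PAC}(i) and convergence to $x^\star\in C$ from \Cref{thm.infiniteConvergence}) that the paper leaves implicit. No gaps.
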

\begin{proof}
The result is a direct consequence of \Cref{cor.linearConvergence_distanceSequence} and \Cref{thm.Fejer*monotone_theorem}(ii).
\end{proof}

\section{Finite Convergence}\label{s6}

In this section we will prove that under an additional assumption on the sequence of perturbation parameters $\{\epsilon_k\}$, PACA enjoys finite convergence, \emph{i.e.}, $x^k$ solves the CFP for some value of $k$. 

Our next result is somewhat remarkable, because it states that if the sequence $\{x^k\}$ is infinite then the sequence $\{\epsilon_k\}$ of perturbation parameters must be summable.
Now, the sequence $\{\epsilon_k\}$ is an exogenous one, which can be freely chosen as long as it decreases to $0$. If we select it so that it is not summable, then it turns out that the sequence $\{x^k\}$ cannot be infinite, and hence, in view of the stopping criterion, there exists some $k\in\na$ such that $x^k$ solves CFP.

We give now an informal argument which explains this phenomenon.
We have proved that $x^k$ approaches the perturbed set $C^k$
at a certain speed (say, linearly), independently of how fast
$\epsilon_k$ decreases to zero. On the other hand, the perturbed sets $C^k$ keep increasing, approaching the target set $C$ from the inside, with a speed determined by the $\epsilon_k$'s. If  $\epsilon_k$ goes to $0$ slowly (say, sublinearly), then, at a certain point, $x^k$ will get very close to $C^k$,
while $C^k$ is still well inside $C$. At this point, $x^k$ gets trapped in $C$, so that it solves CFP, the algorithm suddenly stops, and we get finite convergence. \cref{pic.PACA} depicts such behavior.

\begin{figure}[htpb]
    \centering
    {\includegraphics[width=.7\textwidth]{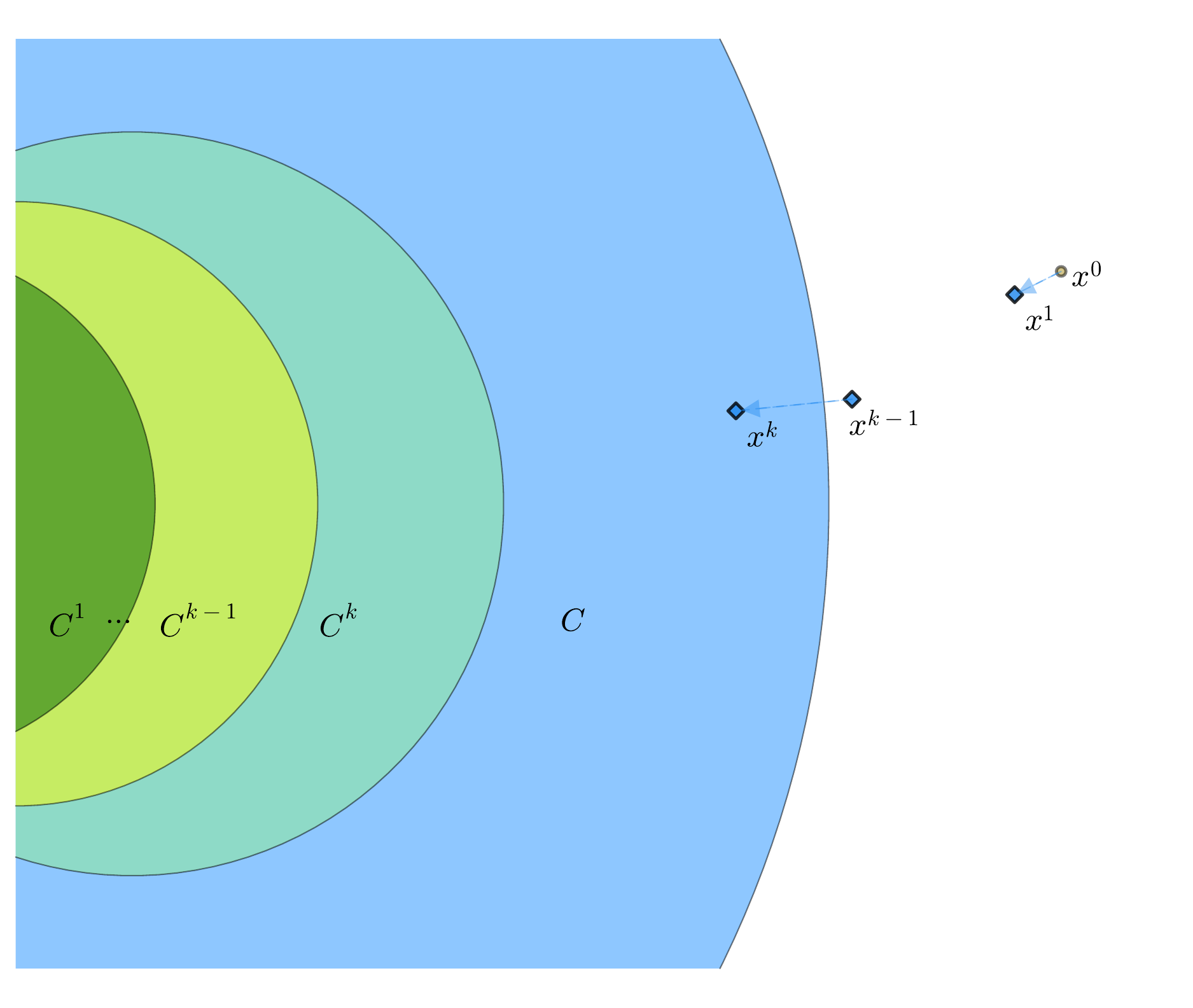}}
    \caption{Sketch of PACA finite convergence.}
    \label{pic.PACA}
   \end{figure}

We mention that, as far as we know, this is the first circumcenter-based method which is proved to be finitely convergent. The methods introduced in \cite{Behling:2024,Behling:2024b} achieve superlinear convergence assuming that the convex sets have smooth boundaries; our convergence analysis requires no smoothness hypothesis.

\begin{lemma}
\label{lem.epsilon_summable}
Let $\{x^k\}$ be the sequence  generated by PACA.  Under the Slater condition given in  \cref{def.Slater_condition}, if the sequence $\{x^k\}$ is infinite, then 
$\sum_{k=0}^\infty\epsilon_k<\infty$.
\begin{proof}
    Let $\delta_k\coloneqq \dist(x^k,C^k)$. By \cref{e33} in \Cref{lem.linearConvergence_distanceSequence.fake} and \cref{e43}, we have
   \[
        \label{eq.thm.finite_convergence.2}
        \delta_{k+1}\le\lambda\delta_k\le\dots\leq\lambda^{k+1-N} \delta_N,
    \]
    for all $k\geq N$.
    Let $z^k$ be the closest point to $x^k$ in $C^k$, so that $\delta_k=\lV x^k-z^k\rV$. Since $\{x^k\}$ is infinite, for all $k$ there exists $j(k)\in\{1,\ldots,m\}$ such that $f_{j(k)}(x^k) >0$. On the other hand, 
    $f_{j(k)}(z^k)+\epsilon_k\le 0$, because $z^k \in C^k$. Using the subgradient inequality, we have
    \[
    \label{eq.thm.finite_convergence.3}
        f_{j(k)}(z^k) \geq f_{j(k)}(x^k) +\left(u_{j(k)}^{k}\right)^\top (z^k-x^k).
    \]
Adding $\epsilon_k$ on both sides of \cref{eq.thm.finite_convergence.3}, we obtain
    \[
    \label{eq.thm.finite_convergence.4}
        0\ge f_{j(k)}(z^k)+\epsilon_k\ge f_{j(k)}(x^k) +\epsilon_k +\left(u_{j(k)}^{k}\right)^\top (z^k-x^k),
   \]
 Since $f_{j(k)}(x^k)>0$, we get from \cref{eq.thm.finite_convergence.4}
    \[
        \label{eq.thm.finite_convergence.4b}
        -\epsilon_k  \geq \left(u_{j(k)}^{k}\right)^\top (z^k-x^k).
    \]
    In view of the above inequality, it holds that 
    \[\label{e36}
    \epsilon_k\le\lv\left(u_{j(k)}^{k}\right)^\top (z^k-x^k)\rv
    \le\lV u_{j(k)}^{k}\rV\,\lV z^k-x^k\rV=\lV u_{j(k)}^{k}\rV\delta_k.
   \]
    From \cref{e36} and \cref{e26}, we have
    \[
        \label{eq.thm.finite_convergence.5}
        \delta_k\ge\frac{\epsilon_k}{\lV u_{j(k)}^{k}\rV}\ge \frac{\epsilon_k}{\sigma}.
    \]
    Now \cref{eq.thm.finite_convergence.2} and \cref{eq.thm.finite_convergence.5} imply 
    \[\label{e35}
     \epsilon_k\le\sigma\delta_N \lambda^{k+1-N},
     \] 
     for all $k\geq N$. Let $\zeta=\sum_{k=0}^{N-2}\epsilon_k$.
     From \cref{e35}, we obtain
    \[ 
    \sum_{k=0}^\infty\epsilon_k=\zeta+
    \sum_{k=N-1}^\infty\epsilon_k
    \le\zeta+\sigma\delta_N\sum_{k=N-1}^\infty\lambda^{k+1-N}=
    \zeta+\frac{\sigma\delta_N}{1-\lambda}<\infty,
    \]
    establishing the result.
\end{proof}
\end{lemma}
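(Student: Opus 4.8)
The plan is to pit two opposing estimates on $\delta_k \coloneqq \dist(x^k,C^k)$ against each other: a geometric \emph{upper} bound coming from the Q-linear convergence already established, and a linear \emph{lower} bound of the form $\delta_k \ge \epsilon_k/\sigma$ coming from the infeasibility of each iterate. Together they force $\epsilon_k$ below a convergent geometric series, which is exactly summability. First I would invoke \Cref{lem.linearConvergence_distanceSequence.fake}, using \cref{e33} and the definition of $\lambda$ in \cref{e43}, to obtain some $N \in \na$ with $\delta_{k+1} \le \lambda\delta_k \le \cdots \le \lambda^{k+1-N}\delta_N$ for all $k \ge N$, where $\lambda = \sqrt{1-1/(m(\beta\sigma)^2)} < 1$. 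Crucially, this decay rate does not depend on how slowly $\{\epsilon_k\}$ tends to zero.

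The heart of the argument is the lower bound. I would let $z^k$ be the point of $C^k$ nearest to $x^k$, so that $\delta_k = \lV x^k - z^k\rV$. Since $\{x^k\}$ is infinite, the stopping criterion never fires, so $x^k \notin C$ and there is an index $j(k)$ with $f_{j(k)}(x^k) > 0$. Membership $z^k \in C^k$ gives $f_{j(k)}(z^k) + \epsilon_k \le 0$, while the subgradient inequality at $x^k$ yields $f_{j(k)}(z^k) \ge f_{j(k)}(x^k) + (u^k_{j(k)})^\top(z^k-x^k)$. Adding $\epsilon_k$ and discarding the positive term $f_{j(k)}(x^k)$ leaves $-\epsilon_k \ge (u^k_{j(k)})^\top(z^k-x^k)$; Cauchy--Schwarz combined with the uniform subgradient bound $\lV u^k_i \rV \le \sigma$ from \cref{e26} (available because $\{x^k\}$ is bounded and the subdifferential is locally bounded) then delivers $\epsilon_k \le \sigma\delta_k$.

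It only remains to combine the two inequalities, $\epsilon_k \le \sigma\delta_k \le \sigma\delta_N\lambda^{k+1-N}$ for $k \ge N$, and split $\sum_{k=0}^\infty\epsilon_k$ into its finite head and a geometric tail bounded by $\sigma\delta_N\sum_{j\ge 0}\lambda^j = \sigma\delta_N/(1-\lambda) < \infty$. I expect the lower bound $\delta_k \ge \epsilon_k/\sigma$ to be the main obstacle, since it is the step that couples the exogenous, possibly very slow decay of $\epsilon_k$ to the intrinsic geometric decay of $\delta_k$; it hinges on two facts that must be in place simultaneously, namely that every iterate is infeasible (so that a strictly positive $f_{j(k)}(x^k)$ can be dropped from the estimate) and that the subgradients are uniformly bounded by $\sigma$. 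Once both bounds are secured, summability is a one-line geometric-series computation.
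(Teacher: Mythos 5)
Your proposal is correct and follows essentially the same route as the paper's proof: the geometric upper bound on $\delta_k$ from \Cref{lem.linearConvergence_distanceSequence.fake}, the lower bound $\delta_k\ge\epsilon_k/\sigma$ obtained via the subgradient inequality at an infeasible index $j(k)$ together with the uniform subgradient bound, and the concluding geometric-series estimate. No discrepancies to report.
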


\begin{theorem}
    \label{thm.finite_convergence}
    Let $\{x^k\}$ be the sequence generated by PACA. Under the Slater condition given in \cref{def.Slater_condition}, if the sequence $\{\epsilon_k\}$ decreases to $0$ and $\sum_{k=0}^\infty\epsilon_k=\infty$, then PACA has finite termination,
    \emph{i.e.}, there exists some index $k$ such that $x^k$ solves CFP.
\end{theorem}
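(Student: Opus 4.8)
The plan is to prove the contrapositive by a direct contradiction argument. Suppose, for the sake of contradiction, that the sequence $\{x^k\}$ generated by PACA is infinite. Then all the hypotheses of \Cref{lem.epsilon_summable} are satisfied: we are under the Slater condition and $\{x^k\}$ is infinite. Consequently, \Cref{lem.epsilon_summable} immediately yields that $\sum_{k=0}^\infty \epsilon_k < \infty$, meaning the perturbation parameters form a summable (convergent) series.

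This directly contradicts the standing assumption of the theorem, namely that $\sum_{k=0}^\infty \epsilon_k = \infty$. Therefore the supposition that $\{x^k\}$ is infinite cannot hold, and the sequence $\{x^k\}$ must be finite. In view of the stopping criterion stated in Step 3 of \Cref{algo:PACA}, the only way the sequence terminates is when $x^k \in C = \bigcap_{i=1}^m C_i$ for some index $k$; that is, $x^k$ solves the CFP. This establishes finite termination.

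The virtue of this approach is that essentially all the technical work has already been carried out in the preceding sections. The heavy lifting resides in \Cref{lem.epsilon_summable}, which in turn relies on the linear convergence of $\{\dist(x^k,C^k)\}$ (\Cref{lem.linearConvergence_distanceSequence.fake}) together with the lower bound $\delta_k \ge \epsilon_k/\sigma$ obtained from the subgradient inequality. Thus the proof of \Cref{thm.finite_convergence} itself is a short logical step: it merely recognizes that an infinite sequence forces summability of $\{\epsilon_k\}$, so a non-summable choice of perturbation parameters is incompatible with the sequence being infinite. There is, in fact, no genuine obstacle remaining at this stage; the proof is a one-line contrapositive invocation of the already-established \Cref{lem.epsilon_summable}. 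If I were writing it from scratch, the only point requiring care would be to state the logic cleanly — that finiteness of the sequence is equivalent, by the stopping rule, to $x^k$ solving the CFP for some $k$ — rather than to reproduce any of the underlying estimates.
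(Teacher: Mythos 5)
Your argument is exactly the paper's: the authors prove \Cref{thm.finite_convergence} as an immediate contrapositive consequence of \Cref{lem.epsilon_summable}, just as you do. Your write-up is correct and, if anything, spells out the logic (and the role of the stopping criterion) more explicitly than the paper's one-line proof.
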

\begin{proof}
    Immediate from \Cref{lem.epsilon_summable}.
\end{proof}

We observe that there are many choices for the sequence $\{\epsilon_k\}$ which satisfy the assumptions of \Cref{thm.finite_convergence},
for instance, $\epsilon_k=\nu k^{-r}$, for any $\nu>0$ and any $r\in (0,1]$.

\section{Numerical experiments}\label{s7}

In order to investigate the behavior of PACA, we compare our proposed algorithm with two other approximate projections based algorithms: CRM on the Pierra's reformulation with approximate projection (denoted as \texttt{CARMprod}) (see \cite{Araujo:2022}),  the Simultaneous subgredient projection method (SSPM) of~\cite{Iusem:1986a}, and the Modified Cyclic subgradient projection (MCSP) by~\cite{DePierro:1988a}.  

Taking into account \cref{thm.finite_convergence}, we set the perturbation parameter for PACA defining $\epsilon_k \coloneqq \hat{\epsilon}(k) = \frac{1}{k}$ (algorithm \texttt{PACA1}) and $\epsilon_k \coloneqq   \bar{\epsilon}(k) = \frac{1}{\sqrt{k}} $ (algorithm \texttt{PACA2}). Therefore, we ensure that \texttt{PACA1} and \texttt{PACA2} have finite termination. 

We comment that SSPM is somehow similar to PACA: the difference relies on the way one set $\alpha_k$, instead of the one PACA uses. For PACA,  $\alpha_k$ is given in \cref{eq.def.alpha_k}, which arises from a circumcenter (see \cref{lem.PAC_and_CRM}). For SSPM, we set $\alpha_k = 1$ and for MCSP, $\alpha_k = \frac{1}{k}$. Meanwhile, MCSP just computes its iterates visiting all sets by means of  perturbed subgradients computed as in \cref{eq.def.v_i_k}.

Likewise to PACA, SSPM and MCSP also enforce a perturbation parameter, and thus \texttt{SSPM1} and \texttt{MCSP1} employ  $\epsilon_k \coloneqq \hat{\epsilon}(k)$, while \texttt{SSPM2} and \texttt{MCSP2} use $\epsilon_k \coloneqq \bar{\epsilon}(k)$.

These methods are applied to the problem of finding a point in the intersection of $m$ ellipsoids, \emph{i.e}, in \cref{eq.CFP_1}, each $C_i \coloneqq \xi_i$ is regarded as an ellipsoid  given by
\[
\xi_i\coloneqq\left\{x \in \re^n \mid f_i(x) \leq 0, \text { for } i=1,2, \ldots, m\right\},
\]
with $f_i: \re^n \rightarrow \re$ defined as \(
f_i(x)\coloneqq x^\top A_i x+2 x^\top b^i-c_i.\) For each $i=1, \ldots, m$, we have that  $A_i\in \re^{n\times n}$  is symmetric positive definite, $b^i$ is a vector, and $c_i$ is a positive scalar.

The ellipsoids are generated in accordance to~\cite[sect.~5]{Behling:2024}, so that the sets have not only nonempty intersection,  but also a Slater point. Moreover, the subdifferential of each $f_i$ is a singleton given by $\partial f_i(x) = 2(A_ix + b^i)$, since each $f_i$ is differentiable.

The computational experiments were performed on an Intel Xeon W-2133 3.60GHz with 32GB of RAM running Ubuntu 20.04 using Julia v1.9 \cite{Bezanson:2017}, and are fully available at \url{https://github.com/lrsantos11/CRM-CFP}.

\Cref{pic.1,tab.Ellipsoid_Time,tab.Ellipsoid_stats} summarize the results for different number of ellpsoids ($m$) and dimensions ($n$). \Cref{pic.1} is performance profile from Dolan and Moré~\cite{Dolan:2002}, a well known tool for comparing multiple algorithms on problem sets based on a performance metric and offers result visualization for benchmark experiments. The best algorighm is the one with higher percentage of problems solved within a given factor of the best time. 
In our case, we use wall-clock time (in seconds) as the performance measure.  
\Cref{tab.Ellipsoid_Time} shows the average wall-clock time per dimension and number of ellipsoids. \Cref{tab.Ellipsoid_stats} shows the statistics of all the experiments with Ellipsoids intersection considering wall-clock time. Our numerical results indicate  the PACA framework as the winner, with \texttt{PACA2} being the fastest algorithm.

Note that the numerical experiments also portray the finite convergence inherent to PACA, MCSP and SSPM, as  the solutions found by  all the algorithms \texttt{PACA1}, \texttt{PACA2}, \texttt{MCSP1}, \texttt{MCSP2}, \texttt{SSPM1} and \texttt{SSPM2} are interior.

\begin{figure}[htpb]
    \centering
 
    {\includegraphics[width=.8\textwidth]{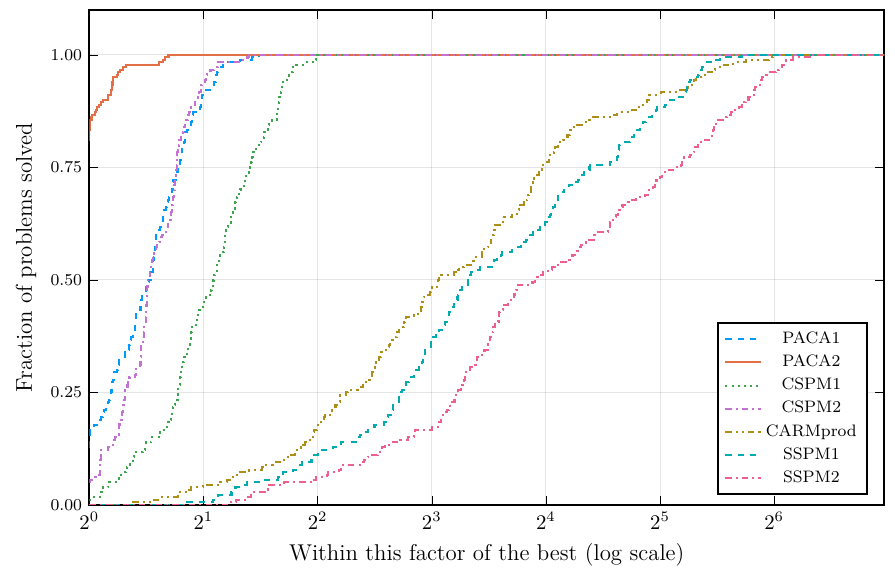}}
    \caption{Performance profile of experiments with Ellipsoids intersection considering wall-clock time (\unit{\second}).}
    \label{pic.1}
   \end{figure}

\sisetup{round-precision = 3, table-alignment=right, scientific-notation=fixed, fixed-exponent=-4, table-auto-round}
{\small \begin{table}[htpb]
    \caption{The average of wall-clock time ($\times\qty{e-4}{\second}$) per dimension ($n$) and number of Ellipsoids ($m$).}
\label{tab.Ellipsoid_Time}
\centering 
\begin{tabular}{rr
    S[table-format = 3.2]
    S[table-format = 3.2]
    S[table-format = 3.2]
    S[table-format = 3.2]
    S[table-format = 4.2]
    S[table-format = 4.2]
    S[table-format = 4.2]}
\toprule 
$n$  & $m$  &   \texttt{PACA1}  & \texttt{PACA2} & \texttt{CSPM1} & \texttt{CSPM2} & \texttt{CARMprod}  & \texttt{SSPM1} & \texttt{SSPM2}\\
\cmidrule(lr){3-9}

20 & 5 & 0.000174969 & 0.000130767 & 0.000229563 & 0.00028464 & 0.00100389 & 0.000657018 & 0.000802294 \\
  20 & 10 & 0.000149423 & 0.000116902 & 0.000249496 & 0.000182543 & 0.0017209 & 0.00114504 & 0.00173275 \\
  20 & 20 & 0.000451726 & 0.000329275 & 0.000744147 & 0.000517616 & 0.00618517 & 0.00496454 & 0.00786001 \\
  50 & 5 & 0.000177545 & 0.000180787 & 0.00025427 & 0.000276673 & 0.000791727 & 0.000738637 & 0.000929345 \\
  50 & 10 & 0.000358364 & 0.000304642 & 0.000561542 & 0.000467513 & 0.00210398 & 0.00236745 & 0.00345441 \\
  50 & 20 & 0.00105021 & 0.000717103 & 0.00137814 & 0.000984226 & 0.00902702 & 0.0106139 & 0.0161446 \\
  100 & 5 & 0.00206747 & 0.00202106 & 0.00269407 & 0.00217155 & 0.00554936 & 0.00865119 & 0.00948101 \\
  100 & 10 & 0.00464727 & 0.00373248 & 0.00450696 & 0.00356153 & 0.0228215 & 0.0245295 & 0.0316846 \\
  100 & 20 & 0.0339669 & 0.0278728 & 0.0283404 & 0.0363673 & 0.881335 & 0.344371 & 0.375855 \\
\bottomrule
\end{tabular}
\end{table}
}

\begin{table}[!ht]
    \caption{Statistics of all the experiments with Ellipsoids intersection considering wall-clock time ($\times\qty{e-4}{\second}$).}
    \label{tab.Ellipsoid_stats}

    \centering 
    \sisetup{round-precision = 3, table-alignment=right, scientific-notation=fixed, fixed-exponent=-4, table-auto-round}
\begin{tabular}{l
                S[table-format = 4.2]
                S[table-format = 2.2]
                S[table-format = 1.2]
                S[table-format = 6.2]}
    \toprule  
    & {\textbf{mean}} &    {\textbf{median}}   &  {\textbf{min}} &   {\textbf{max}}     \\
    \cmidrule(lr){2-5}
    % \cmidrule(lr){1-1}
    % \midrule
  \texttt{PACA1}     & 0.00478265  & 0.000380431 & 2.32151e-5 & 0.544721 \\
  \texttt{PACA2}     & 0.00393398  & 0.000294468 & 2.95695e-5 & 0.434008 \\
   \texttt{MCSPM1}   & 0.00432874  & 0.000611745 & 5.08408e-5 & 0.409626 \\
   \texttt{MCSPM2}   & 0.00497929  & 0.000405281 & 4.12874e-5 & 0.558935 \\
    \texttt{CARMprod} & 0.103393    & 0.00219481 & 0.000211971 & 16.0615 \\
   \texttt{MSSPM1}   & 0.0442265   & 0.00304216 & 0.00010617 & 5.32259 \\
   \texttt{MSSPM2}   & 0.0497716   & 0.00412424 & 0.000156793 & 5.41655 \\
    \bottomrule
    \end{tabular}
  \end{table}

\section{Concluding remarks}\label{s8}

In this work, we introduced a novel algorithm called Perturbed Approximate Circumcenter Algorithm (PACA) to address the Convex Feasibility Problem (CFP). 
The proposed algorithm ensures finite convergence under a Slater condition, making it unique in the landscape of circumcenter schemes for CFP.  In addition to yieding finite convergence, this method leverages projections onto perturbed halfspaces, which are explicitly computed in contrast to obtaining general  orthogonal projections onto convex set. Numerical experiments further showcase PACA's efficiency compared to existing methods.

\section*{Acknowledgments}

The authors are grateful to the anonymous referees and the handling editor for their valuable comments and suggestions which helped to improve the quality of the paper.

\bibliographystyle{siamplain}

\bibliography{refs}

\end{document}